\newtheorem{thm}{Theorem}
\newtheorem{prop}{Proposition}
\newtheorem{lem}{Lemma}
\newtheorem{cor}{Corollary}
\newtheorem{prob*}{Problem}
\theoremstyle{remark}
\newtheorem{rem}{Remark}
\newtheorem{ex}{Example}
\theoremstyle{definition}
\newcommand{\Om}{\Omega^{U}}
\newcommand{\C}{\mathbb{ C}}
\newcommand{\Hom}{\operatorname{Hom}}
\newcommand{\II}{\mathcal{ I}}
\newcommand{\Q}{\mathbb{ Q}}
\newcommand{\R}{\mathbb{ R}}
\newcommand{\rk}{\operatorname{rk}}
\newcommand{\Z}{\mathbb{ Z}}
\newcommand{\Hi}{{\mathcal Hir}}
\newcommand{\Ho}{\mathcal{ H}}
\newcommand{\Po}{\mathcal P}
\newcommand{\CH}{\mathcal{ CH}}
\newcommand{\PP}{\mathcal{ PP}}
\newcommand{\J}{{\mathcal J}}
\newcommand{\JJ}{\mathcal{JO}}
\newcommand{\I}{{\mathcal I}}
\newcommand{\im}{\operatorname{Im}}
\newcommand{\Td}{\operatorname{Td}}
\title{The Hodge ring of K\"ahler manifolds}
\author{D.~Kotschick}
\address{Mathematisches Institut, {\smaller LMU} M\"unchen,
Theresienstr.~39, 80333~M\"unchen, Germany}
\email{dieter@member.ams.org}
\author{S.~Schreieder}
\address{Mathematisches Institut, {\smaller LMU} M\"unchen,
Theresienstr.~39, 80333~M\"unchen, Germany; and Trinity College, Cambridge, CB2 1TQ, United Kingdom}
\email{stefan.schreieder@googlemail.com}
\curraddr{Max-Planck-Institut f{\"u}r Mathematik, Vivatsgasse 7, 53111 Bonn, Germany}
\date{October 8, 2012; \copyright{\ D.~Kotschick and S.~Schreieder 2011}}
\thanks{We are very grateful to J.~Bowden for a stimulating question.}
\subjclass[2000]{primary 32Q15, 14C30, 14E99; secondary 14J80, 57R77}
\keywords{Hodge numbers, Chern numbers, Hirzebruch problem}
\begin{document}

\maketitle

\centerline{{\it Dedicated to the memory of F.~Hirzebruch}}

\begin{abstract}
We determine the structure of the Hodge ring, a natural object encoding the Hodge numbers of all compact K\"ahler manifolds.
As a consequence of this structure, there are no unexpected relations among the Hodge numbers, and no essential differences
between the Hodge numbers of smooth complex projective varieties and those of arbitrary K\"ahler manifolds. 
The consideration of certain natural ideals in the Hodge ring allows us to determine exactly
which linear combinations of Hodge numbers are birationally invariant, and which are topological invariants.
Combining the Hodge and unitary bordism rings, we are also able to treat linear combinations of Hodge and 
Chern numbers. In particular, this leads to a complete solution of a classical problem of Hirzebruch's.
\end{abstract}

\section{Introduction}

For the purpose of studying the spread and potential universal relations among the Betti numbers of manifolds,
one can use elementary topological operations such as connected sums to modify the Betti numbers in examples.
This leads to the conclusion that there are no universal relations among the Betti numbers, other than the ones 
imposed by Poincar\'e duality. However, not every set of Betti numbers compatible with Poincar\'e duality is
actually realized by a (connected) manifold. This subtlety is removed, and the discussion in different dimensions
combined into one, by the following definition: consider the Betti numbers as $\Z$-linear functionals on formal 
$\Z$-linear combinations of oriented equidimensional manifolds, and identify two such linear combinations if they
have the same Betti numbers and dimensions. The quotient is a graded ring, the oriented Poincar\'e ring $\Po_*$, 
graded by the dimension, with multiplication induced by the Cartesian product of manifolds. This ring has an interesting
structure, which we determine in Section~\ref{s:Pring} below. It turns out that $\Po_*$ is finitely generated by manifolds 
of dimension at most $4$, but is not a polynomial ring over $\Z$, although it does become a polynomial ring after 
tensoring with $\Q$. 

In Section~\ref{s:rings} we carry out an analogous study for the Hodge numbers of compact K\"ahler manifolds.
This is potentially much harder, since there is no connected sum or similar cut-and-paste operation in the K\"ahler 
category that would allow one to manipulate individual Hodge numbers $h^{p,q}$ in examples. Indeed, it seems to 
have been unknown until now, whether there are any universal relations among the Hodge numbers of K\"ahler manifolds 
beyond the symmetries $h^{q,p}=h^{p,q}=h^{n-p,n-q}$.
Complex algebraic geometry does provide many constructions of K\"ahler manifolds, but these constructions are not
as flexible as one might want them to be. Moreover, in spite of the recent work of Voisin~\cite{V}, the gap between 
complex projective varieties on the one hand and compact K\"ahler manifolds on the other is far from understood.
We refer the reader to Simpson's thought-provoking survey~\cite{Si} for a description of the general state of ignorance concerning
the spread of Hodge numbers and other invariants of K\"ahler manifolds.

It is our goal here to shed some light on the behaviour and properties of Hodge numbers of K\"ahler manifolds. For this 
purpose we consider the Hodge numbers as $\Z$-linear functionals on formal $\Z$-linear combinations of compact 
equidimensional K\"ahler manifolds and identify two such linear combinations if they have the same Hodge numbers and 
dimensions. The quotient is a graded ring, the Hodge ring $\Ho_*$, graded by 
the complex dimension, with multiplication again induced by the Cartesian product.
 Its structure is described by the following result.
\begin{thm}\label{t:Hstructure}
The Hodge ring $\Ho_*$ is a polynomial ring over $\Z$, with two generators in degree one, and one in degree two.
For the generators one may take the projective line $L=\C P^1$, an elliptic curve $E$, and any K\"ahler surface $S$
with signature $\pm 1$.
\end{thm}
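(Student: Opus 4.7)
The plan is to identify $\Ho_n$ with an explicit $\Z$-lattice and then to show that the monomials $L^aE^bS^c$ of total degree $n$ form a $\Z$-basis of it. Hodge symmetry and Serre duality impose $h^{p,q}=h^{q,p}=h^{n-p,n-q}$ on every compact K\"ahler $n$-fold, so every Hodge diamond is determined by its values on
$$I_n=\{(p,q)\in\Z^2:0\le p\le q,\ p+q\le n\},$$
and the definition of $\Ho_n$ provides an injection $\Ho_n\hookrightarrow\Z^{I_n}$ of free abelian groups. An elementary count gives $|I_n|=\#\{(a,b,c)\in\Z_{\ge 0}^3:a+b+2c=n\}$, which is the rank of $\Z[L,E,S]_n$, so it suffices to prove that the composition
$$\Z[L,E,S]_n\longrightarrow\Ho_n\hookrightarrow\Z^{I_n}$$
is an isomorphism; then all three groups coincide and the theorem follows by summing over $n$.

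For the core of the argument I first take $S=\cp$ and introduce the virtual class $\widetilde S:=L^2-\cp\in\Ho_2$, whose Hodge polynomial is simply $uv$, so $\widetilde S$ has $h^{1,1}=1$ and all other $h^{p,q}$ zero. Since $L^2=\widetilde S+\cp$, the $\Z$-modules spanned in each degree by $\{L^aE^bS^c\}$ and by $\{L^aE^b\widetilde S^c\}$ coincide, and the Hodge polynomial of $L^aE^b\widetilde S^c$ factors as
$$(uv)^c(1+uv)^a\bigl((1+u)(1+v)\bigr)^b,$$
supported in the translated cone $\{p,q\ge c\}$. Ordering the generators by $c$ and the elements of $I_n$ by $p$ (both ascending), the matrix representing the composition above becomes block-triangular: the off-diagonal block at row-index $c$ and column-index $p<c$ vanishes, while the $c$-th diagonal block has size $n-2c+1$ and entry $\binom{b}{q-c}$ at row $b\in[0,n-2c]$, column $q-c\in[0,n-2c]$. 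This is a Pascal matrix, triangular with $1$'s on the diagonal, so the full transition matrix has determinant $\pm 1$, proving that $\Z[L,E,S]_n$, $\Ho_n$ and $\Z^{I_n}$ all coincide.

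Finally, to allow an arbitrary K\"ahler surface $S$ of signature $\pm 1$ as the degree-two generator, I expand $[S]$ against the basis $\{L^2,LE,E^2,\cp\}$ built in the previous step. A direct computation identifies the coefficient of $\cp$ with $2+2h^{2,0}(S)-h^{1,1}(S)=\tau(S)$, giving $[S]=\alpha L^2+\beta LE+\gamma E^2+\tau(S)\,\cp$ in $\Ho_2$ with $\alpha,\beta,\gamma\in\Z$. When $\tau(S)=\pm 1$ this relation may be inverted to express $\cp$ as an integer polynomial in $L,E,S$, so $\Z[L,E,S]=\Z[L,E,\cp]=\Ho_*$. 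The main obstacle is the surjectivity in the middle step --- one must realise every symmetric integer Hodge diamond as a $\Z$-combination of Hodge diamonds of products $L^aE^bS^c$ --- and the device that overcomes it is the replacement of $\cp$ by the virtual class $\widetilde S$, which collapses the surface generator to a pure $h^{1,1}$-class and reduces the entire computation to Pascal's triangle.
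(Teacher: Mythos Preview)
Your argument is correct and is genuinely different from the paper's. The paper proves the equivalent Theorem~\ref{t:structure} by first establishing injectivity of the map $\Z[A,B,C]\to\Ho_*$ via a mod-$p$ argument (Lemma~\ref{claim1}, which isolates the $C$-free part by setting $y=0$), and then deduces surjectivity from a rank count together with a Smith-normal-form style application of the same mod-$p$ injectivity. Your generators $L,E,\widetilde S$ coincide with the paper's $A,A+B,C$, but instead of the two-step injectivity/surjectivity argument you compute the transition matrix from monomials to the lattice $\Z^{I_n}$ explicitly and show it is block-triangular with unipotent Pascal blocks on the diagonal. This gives injectivity and surjectivity in one stroke and, as a bonus, an explicit formula for expressing every formal Hodge diamond in the monomial basis. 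The paper's approach has the virtue of requiring no bookkeeping with indices and of isolating the key algebraic fact (independence mod every prime) in a clean lemma; your approach is more hands-on but entirely self-contained and makes the integrality visible at the level of matrix entries rather than through a divisibility argument. Your final reduction from $\C P^2$ to an arbitrary surface of signature $\pm 1$ is identical to the paper's Corollary~\ref{c:Hodge}.
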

Note that a priori it is not at all obvious that $\Ho_*$ is finitely generated, let alone generated by elements of small degree.
Moreover, in the topological situation of the Poincar\'e ring, the corresponding structure is more complicated,
in that $\Po_*$ is not a polynomial ring over $\Z$.

The proof of this theorem has several important consequences, including the following:
\begin{enumerate}
\item Since we may take the surface $S$ to be projective, the Hodge ring is generated by projective varieties. 
This is in contrast with the work of Voisin~\cite{V} on the Kodaira problem, which showed that more subtle features of Hodge 
theory do distinguish the topological types of projective manifolds from those of arbitrary K\"ahler manifolds.
\item Counting monomials, we see that the degree $n$ part $\Ho_n$ of the Hodge ring is a free $\Z$-module of rank equal 
to the number of Hodge numbers modulo the K\"ahler symmetries $h^{q,p}=h^{p,q}=h^{n-p,n-q}$.
Thus there are no universal $\Q$-linear relations between the Hodge numbers, other than 
the ones forced by the known symmetries.
\item The proof of Theorem~\ref{t:Hstructure} will show that the Hodge numbers $h^{p,q}$ with $0\leq q\leq p\leq n$ and 
$p+q\leq n$ form a $\Z$-module basis for $\Hom(\Ho_n,\Z)$.
Therefore there are no non-trivial universal congruences among these Hodge numbers.
\end{enumerate}
For technical reasons, we find it more convenient to work with a different definition of $\Ho_*$, rather than the one given above. 
However, it will follow from the discussion in Section~\ref{s:rings} below that the two definitions give the same result, and this fact
will establish statement (3), cf.~Remark~\ref{r:definitions}.

In working with Hodge numbers, the Hodge ring plays a r\^ole analogous to that of the unitary bordism ring $\Om_*$
in working with Chern numbers. This bordism ring is also generated by smooth complex  projective varieties, and its 
structure shows that there are no universal $\Q$-linear relations between the Chern numbers, cf.~Subsection~\ref{ss:bordism} 
below.
However, in that case the analogue of statement (3) above is not true, in that there are universal congruences 
between the Chern numbers.

Our determination of the Hodge ring over $\Z$ allows us to write down all universal linear relations or congruences 
between the Hodge numbers of smooth projective varieties and their Pontryagin or Chern numbers:
\begin{enumerate}
\item[(HP)] A combination of Hodge numbers equals a combination of Pontryagin numbers if and only if it is a multiple of the 
signature, see Corollary~\ref{c:HP}.
\item[(HC)] A combination of Hodge numbers equals a combination of Chern numbers if and only if it is a combination of
the $\chi_p=\sum_q (-1)^q h^{p,q}$, see Corollary~\ref{c:univ}.
\end{enumerate}
In these statements the Hodge numbers are considered modulo the K\"ahler symmetries. We prove (HP) and (HC) 
in the strongest form possible, for equalities mod $m$ for all $m$; the statements over $\Z$ or $\Q$ follow. 
While the validity of these relations is of course well-known, their uniqueness is new, except that, with coefficients in $\Q$,
statement (HC) could be deduced from~\cite[Corollary~5]{Chern}. Just as it was unknown until now whether there are 
universal relations between the Hodge numbers -- we prove that there are none beyond the K\"ahler symmetries -- their 
potential relations with the Chern and Pontryagin numbers were unknown.

In Section~\ref{s:comp} we analyze the comparison map $f\colon\Ho_*\longrightarrow\Po_*$, whose image is 
naturally the Poincar\'e ring of K\"ahler manifolds. We will see  that there are no universal relations between the Betti 
numbers of K\"ahler manifolds, other than the vanishing mod $2$ of the odd-degree Betti numbers. Setting aside these
trivial congruences, the only relations between the Betti numbers of smooth projective varieties and their Pontryagin 
or Chern numbers are the following:
\begin{enumerate}
\item[(BC)] A combination of Betti numbers equals a combination of Chern numbers if and only if it is a multiple of the 
Euler characteristic, see Corollary~\ref{c:BC}.
\item[(BP)] Any congruence between a $\Z$-linear combination of Betti numbers of smooth complex projective varieties 
of complex dimension $2n$ and a non-trivial combination of Pontryagin numbers is a consequence of $e\equiv (-1)^n\sigma \mod 4$,
see Corollary~\ref{c:BPKaehler}. Here $e$ and $\sigma$ denote the Euler characteristic and the signature respectively.
\end{enumerate}
In both statements the Betti numbers are considered modulo the symmetry imposed by Poincar\'e duality. In (BP) the conclusion is 
that there are no universal $\Q$-linear relations.

We shall determine several geometrically interesting ideals in the Hodge ring. 
An easy one to understand is the ideal generated by differences of birational smooth projective 
varieties. This leads to the following result, again modulo the K\"ahler symmetries of Hodge numbers:
\begin{thm}\label{t:Hbirat}
The mod $m$ reduction of an integral linear combination of Hodge numbers is a birational invariant of projective 
varieties if and only if the linear combination is congruent modulo $m$ to a linear combination of the $h^{0,q}$.
\end{thm}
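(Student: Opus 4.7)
The plan is to identify the degree-$n$ part $B_n$ of the ideal $B \subseteq \Ho_*$ generated by differences $[X] - [Y]$ of birational smooth projective varieties, and to show that $B_n$ equals the kernel $J_n$ of the map
\[
\Ho_n \longrightarrow \Z^{n+1}, \qquad \alpha \longmapsto \bigl(h^{0,0}(\alpha), h^{0,1}(\alpha), \ldots, h^{0,n}(\alpha)\bigr).
\]
The inclusion $B_n \subseteq J_n$ is classical: the Hodge numbers $h^{0,q}$ are birational invariants of smooth projective varieties.

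For the reverse inclusion I would invoke Theorem~\ref{t:Hstructure} with the explicit choice $S = \C P^2$ (signature $+1$), so that $\Ho_* = \Z[L, E, S]$. The single geometric input needed is that $\C P^1 \times \C P^1$ and $\C P^2$ are birational rational surfaces, which gives
\[
L^2 - S \;=\; [\C P^1 \times \C P^1] - [\C P^2] \;\in\; B_2.
\]
Since $B$ is an ideal, $(L^2 - S) \cdot \Ho_{n-2} \subseteq B_n$, so $S \equiv L^2 \pmod{B}$. Substituting $L^{2c}$ for each $S^c$ in the monomial basis $L^a E^b S^c$ of $\Ho_n$ (with $a + b + 2c = n$) shows that $\Ho_n/B_n$ is generated as a $\Z$-module by the $n+1$ classes $[L^{n-b} E^b]$, $0 \le b \le n$.

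The endgame is a unimodular matrix computation. The K\"unneth formula gives $h^{0,q}([L^{n-b} E^b]) = h^{0,q}(E^b) = \binom{b}{q}$, so the composition
\[
\Z^{n+1} \;\twoheadrightarrow\; \Ho_n/B_n \;\xrightarrow{\;(h^{0,0}, \ldots, h^{0,n})\;}\; \Z^{n+1},
\]
in which the first arrow sends the $b$-th generator to $[L^{n-b} E^b]$, is represented by the lower-triangular matrix $\bigl[\binom{b}{q}\bigr]_{0 \le b,q \le n}$, which has $1$'s on the diagonal and is therefore unimodular over $\Z$. Both arrows must then be isomorphisms, forcing $B_n = J_n$ integrally. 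The mod $m$ refinement is automatic since unimodularity is preserved by reduction: any $\Z$-linear functional $\phi$ with $\phi(B_n) \subseteq m\Z$ descends to $(\Ho_n/B_n) \otimes \Z/m \cong (\Z/m)^{n+1}$ and is thus congruent mod $m$ to some combination $\sum_q d_q h^{0,q}$.

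I do not foresee a serious obstacle. The only geometric input beyond Theorem~\ref{t:Hstructure} is the birationality of $\C P^1 \times \C P^1$ with $\C P^2$; the conceptual point is that, because $\Ho_*$ is a polynomial ring, this single relation already kills every non-$h^{0,q}$ functional modulo $B$, and the rest of the argument reduces to binomial linear algebra that is manifestly unimodular over $\Z$.
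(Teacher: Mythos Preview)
Your proof is correct and follows essentially the same route as the paper's Theorem~\ref{t:biratideal}: both identify the birational ideal with the principal ideal generated by a single degree-$2$ element (the paper uses a blowup $\hat S-S$, you use $[\C P^1\times\C P^1]-[\C P^2]$; both equal $C=xy\cdot z^2$ in $\Ho_2$), and both finish with a rank argument showing this ideal coincides with $\ker(h^{0,\bullet})$. Your explicit unimodular binomial matrix is simply a concrete version of the paper's abstract rank count.
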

It follows that a rational linear combination of Hodge numbers is a birational invariant of smooth complex projective 
varieties if and only if, modulo the K\"ahler symmetries, it is a combination of the $h^{0,q}$ only.

Other ideals in $\Ho_*$ we will calculate are those of differences of homeomorphic or diffeomorphic
complex projective varieties, thereby determining exactly which linear combinations of Hodge numbers
are topological invariants. The question of the topological invariance of Hodge numbers was first raised 
by Hirzebruch in 1954. His problem list~\cite{Hir1} contains the following question about the Hodge and Chern numbers of 
smooth complex projective varieties, listed there as Problem~31:

{\it
Are the $h^{p,q}$ and the Chern characteristic numbers of an algebraic variety $V_n$ topological invariants of $V_n$? If not,
determine all those linear combinations of the $h^{p,q}$ and the Chern characteristic numbers which are topological invariants.
}

Since the time of  Hirzebruch's problem list almost sixty years ago, this and related questions have been raised repeatedly in other places,
such as a \url{mathoverflow} posting by S.~Kov\'acs in late 2010, asking whether the Hodge numbers of K\"ahler 
manifolds are diffeomorphism invariants.
The special case of Hirzebruch's question where one considers linear combinations of Chern numbers only, without the Hodge
numbers, was recently answered by the first author~\cite{PNAS,Chern}. That answer used the structure results of Milnor~\cite{M,TM}
and Novikov~\cite{NN} for the unitary bordism ring, exploiting the bordism invariance of Chern numbers. 
The Hodge numbers were not treated systematically in~\cite{PNAS,Chern} because they are not bordism invariants. However,
the results of those papers, and already of~\cite{JTop}, show that certain linear combinations of Hodge numbers that are bordism 
invariants because of the Hirzebruch--Riemann--Roch theorem are not (oriented) diffeomorphism invariants in complex dimensions 
$\geq 3$. This failure of diffeomorphism invariance of Hodge numbers, which can be traced to the fact that certain examples of pairs 
of algebraic surfaces with distinct Hodge numbers from~\cite{MAorient} become diffeomorphic after taking products with $\C P^1$, 
say, was also observed independently several years ago by F.~Campana (unpublished).

In spite of these observations, the question of determining which linear combinations of Hodge numbers are topological invariants 
was still wide open. In Section~\ref{s:HH} below we settle this question using the Hodge ring and the forgetful
comparison map $f\colon\Ho_*\longrightarrow\Po_*$. The result is:
\begin{thm}\label{t:main2}
The mod $m$ reduction of an integral linear combination of Hodge numbers of smooth complex projective varieties is
\begin{enumerate}
\item an oriented homeomorphism or diffeomorphism invariant if and only if it is congruent mod $m$ to a linear combination of 
the signature, the even-degree Betti numbers and the halves of the odd-degree Betti numbers, and 
\item an unoriented homeomorphism invariant in any dimension, or an unoriented diffeomorphism invariant in dimension $n\neq 2$,
if and only if it is congruent mod $m$ to a linear combination of the even-degree Betti numbers and the halves of the odd-degree Betti numbers.
\end{enumerate}
\end{thm}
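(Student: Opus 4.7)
The plan is to compute, for each $n \ge 1$, the subgroup $\mathcal{D}_n \subseteq \mathcal{H}_n$ generated by differences $[X]-[Y]$ of orientedly homeomorphic or diffeomorphic smooth complex projective varieties of complex dimension $n$, together with its unoriented analogue $\mathcal{D}_n^{\mathrm{un}}$. Since $\mathcal{H}_n$ is a free abelian group by \thmref{t:Hstructure}, the theorem translates into the assertion that the mod-$m$ reductions of these subgroups coincide with the kernels of the evaluation maps $\phi_n, \phi_n^{\mathrm{un}} \colon \mathcal{H}_n \to \mathbb{Z}^N$ whose coordinates are the invariants listed in the statement.

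The inclusions $\mathcal{D}_n \subseteq \ker\phi_n$ and $\mathcal{D}_n^{\mathrm{un}} \subseteq \ker\phi_n^{\mathrm{un}}$ are classical: Betti numbers are homeomorphism invariants, odd Betti numbers of K\"ahler manifolds are even by Hodge theory so their halves are integers, and the signature is an oriented homeomorphism invariant by Novikov's theorem. For the opposite inclusions, the main input is the family of pairs of projective surfaces $X_i, Y_i$ with distinct Hodge numbers such that $X_i\times \mathbb{C}P^1$ and $Y_i\times\mathbb{C}P^1$ are orientedly diffeomorphic, constructed in \cite{MAorient, JTop, PNAS, Chern}. Multiplying these difference classes by monomials in the polynomial generators $L=\mathbb{C}P^1$, $E$ and $S$ of $\mathcal{H}_*$ — possible because $\mathcal{H}_*$ is a polynomial ring by \thmref{t:Hstructure} — distributes their Hodge-number differences across all complex dimensions $\ge 3$, while blow-ups supply further differences in low dimensions. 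One then checks that the resulting collection spans $\ker\phi_n$ mod $m$.

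The unoriented statements require one additional class of examples to eliminate the signature as a possible invariant. In odd complex dimensions the signature vanishes identically, so nothing further is needed; in even complex dimensions $\ge 4$ one produces projective varieties $X, Y$ sharing an underlying unoriented smooth manifold but whose Hodge-number differences register a nonzero signature, by pairing a surface from the oriented list with an auxiliary factor whose underlying smooth manifold admits an orientation-reversing K\"ahler structure. The exclusion of complex dimension two in the unoriented diffeomorphism part reflects the absence of analogous surface-level constructions; for homeomorphism the greater flexibility of four-manifold topology removes this obstruction.

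The main obstacle is integrality: one must establish $\mathcal{D}_n \otimes \mathbb{Z}/m = \ker\phi_n \otimes \mathbb{Z}/m$ for every $m\ge 2$, not merely the rational statement. This forces the explicit pairs to produce differences that generate $\ker\phi_n$ as a $\mathbb{Z}$-module rather than just span it after tensoring with $\mathbb{Q}$, so that the elementary divisors of $\phi_n$ are matched exactly. Managing these divisors by a careful selection of surface pairs, conjugations and blow-ups — rather than merely producing enough rational relations — is where I expect the bulk of the work to lie.
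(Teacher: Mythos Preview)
Your broad strategy --- use surface pairs from \cite{MAorient} with equal Betti numbers but distinct Hodge numbers, then propagate by taking products with the generators $L$, $E$, $S$ of $\mathcal{H}_*$ --- matches the paper's. What you are missing is the structural lemma that collapses the integrality problem you worry about. The paper first proves (Proposition~\ref{t:comp}) that $\ker f$ is a \emph{principal} ideal, generated by a single degree-$2$ element $G$ with vanishing Betti numbers and signature $4$. This reduces the entire question to showing that $G$ itself lies in the ideal of homeomorphic differences, which is a one-line B\'ezout argument: by \cite[Theorem~3.7]{MAorient} there exist two pairs $(X_i,Y_i)$ of orientation-reversingly homeomorphic simply connected projective surfaces whose signatures have greatest common divisor $2$, so an integer combination $a(X_1-Y_1)+b(X_2-Y_2)$ has signature exactly $4$; since it also has zero Betti numbers it must equal $G$ in $\mathcal{H}_2$. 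The ideal structure then transports $G$ to every degree automatically, with no further elementary-divisor bookkeeping. So the issue you flag as ``the bulk of the work'' is in fact short once the principal-ideal lemma is in hand.

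Two smaller points. Your remark that ``blow-ups supply further differences in low dimensions'' is a red herring: a blow-up changes the homeomorphism type and so contributes nothing to $\mathcal{D}_n$; in fact $\mathcal{D}_1=0$ and $\ker\phi_2=0$ in the oriented case, so there is nothing to produce in degrees $\le 2$ for part~(1). Second, for the diffeomorphism statements you need products of the $X_i-Y_i$ with $E$ as well as with $\mathbb{C}P^1$, and these require a separate argument: the paper invokes Wall's theorem that homeomorphic simply connected $4$-manifolds are smoothly $h$-cobordant, then applies the $h$-cobordism theorem for products with $\mathbb{C}P^j$ and the $s$-cobordism theorem (via the $S^1$ factor, Lemma~\ref{l:scob}) for products with $E$. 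Finally, for the unoriented case in even complex dimensions you do not need an ``auxiliary factor with an orientation-reversing K\"ahler structure'': the surface pairs $(X_i,Y_i)$ are themselves orientation-reversingly homeomorphic, so their products with powers of $\mathbb{C}P^2$ already give unoriented-homeomorphism differences of nonzero signature.
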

The corresponding result for rational linear combinations follows.
Complex dimension $2$ has to be excluded when discussing diffeomorphism invariant Hodge numbers, 
since in that dimension all the Hodge numbers are linear combinations of Betti numbers and the signature, and the signature is, 
unexpectedly, invariant under all diffeomorphisms, even if they are not assumed to preserve the 
orientation, see~\cite[Theorem~6]{BLMSorient}, and also~\cite[Theorem~1]{JTop}.

In Section~\ref{s:CH} we consider arbitrary $\Z$-linear combinations of Hodge and Chern numbers.
In the same way that the Hodge numbers lead to the definition of $\Ho_*$, these more general linear
combinations lead to the definition of another ring, the Chern--Hodge ring $\CH_*$. 
We use $\CH_*$ to prove that Theorem~\ref{t:Hbirat} remains true for mixed linear combinations of Hodge and Chern numbers 
in place of just Hodge numbers. In this general setting, the conclusion of course has to be interpreted modulo the HRR relations, 
see Theorem~\ref{t:CHbiratideal} and Corollary~\ref{c:CHbirat}, which also generalize a recent theorem about Chern numbers 
proved over $\Q$ by Rosenberg~\cite[Theorem~4.2]{R}.

In Section~\ref{s:Hgen} we study certain ideals in $\CH_*\otimes\Q$, leading to the following answer to the general form of Hirzebruch's 
question, mixing the Hodge and Chern numbers in linear combinations:
\begin{thm}\label{t:main3}
A rational linear combination of Hodge and Chern numbers of smooth complex projective varieties is
\begin{enumerate}
\item an oriented homeomorphism or diffeomorphism invariant if and only if it reduces to a linear combination of the Betti and Pontryagin 
numbers after perhaps adding a suitable combination of the $\chi_p-\Td_p$, and
\item an unoriented homeomorphism invariant in any dimension, or an unoriented diffeomorphism invariant in dimension $n\neq 2$,
if and only if it reduces to a linear combination of the Betti numbers after perhaps adding a suitable combination of the $\chi_p-\Td_p$.
\end{enumerate}
\end{thm}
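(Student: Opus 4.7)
The plan is to work inside the rational Chern--Hodge ring $\CH_*\otimes\Q$ from Section~\ref{s:CH}, where rational linear combinations of Hodge and Chern numbers are identified with $\Q$-linear functionals on $\CH_n\otimes\Q$, and the null functionals are precisely the combinations of the HRR relations $\chi_p-\Td_p$. A functional is an oriented (respectively unoriented) diffeomorphism or homeomorphism invariant if and only if it vanishes on the subspace $T_n\subset\CH_n\otimes\Q$ spanned by the differences $[X]-[Y]$ of smooth projective pairs related by the corresponding equivalence. The ``if'' direction of Theorem~\ref{t:main3} is then a routine verification: Betti numbers are homeomorphism invariants, Pontryagin numbers are oriented homeomorphism invariants by Novikov's theorem, and any combination of $\chi_p-\Td_p$ is the zero functional on $\CH_n$ by HRR.

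For the ``only if'' direction I would take an invariant $L$ and reduce it in two steps. First, using HRR, add a suitable combination of $\chi_p-\Td_p$ to replace every $\chi_p$ by $\Td_p$, so that $L$ becomes the sum of a pure Hodge combination $L_H$ and a pure Chern combination $L_C$. Second, separate the two pieces and apply Theorem~\ref{t:main2} to $L_H$ and the analogous result of \cite{PNAS,Chern} to $L_C$: the former pins down the invariant Hodge functionals as $\Q$-combinations of Betti numbers and, in the oriented case, the signature, while the latter identifies the invariant Chern functionals as $\Q$-combinations of Pontryagin numbers (oriented) or of the Euler characteristic (unoriented, in dimension $\neq 2$). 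Since the signature is a Pontryagin number and the Euler characteristic is a Betti combination, the assembled output matches the form claimed in the theorem.

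The main obstacle is the separation step: a combination $L$ could in principle have Hodge and Chern components which separately fail to be invariant but whose sum is invariant thanks to a Chern--Hodge coupling hidden inside the difference classes $[X]-[Y]$. To rule this out I would exhibit enough explicit homeomorphic and diffeomorphic pairs of smooth projective varieties---products of $\C P^1$ with the surface examples of \cite{MAorient} and with the higher-dimensional constructions of \cite{PNAS,Chern,JTop}---so that the $\Q$-span of the differences $[X]-[Y]$ in $\CH_n\otimes\Q$, reduced modulo HRR, is exactly the predicted complement of the claimed invariant subspace. The ring structure of $\CH_*\otimes\Q$ developed in Section~\ref{s:CH} reduces this to a finite low-dimensional verification that then propagates to all $n$ by multiplicativity. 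Finally, the exclusion of complex dimension~$2$ in the unoriented diffeomorphism case is forced on us by \cite{BLMSorient,JTop}: in real dimension~$4$ the signature is unexpectedly invariant under \emph{all} diffeomorphisms, producing an additional invariant Pontryagin combination only in that dimension.
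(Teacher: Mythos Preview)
Your framework and the ``if'' direction are fine. The trouble is in your second paragraph: decomposing $L=L_H+L_C$ and then applying Theorem~\ref{t:main2} to $L_H$ and the results of~\cite{Chern} to $L_C$ separately cannot be done as stated, because those theorems require the individual summands to be invariant, and nothing so far guarantees that. You recognise this yourself in the third paragraph. The point is that the decomposition step is not merely obstructed---it is superfluous. If, as you propose in your third paragraph, you succeed in showing that the span of the differences $[X]-[Y]$ in $\CH_n\otimes\Q$ equals the predicted complement, then any invariant functional $L$ factors directly through the forgetful map to $\Po_*\otimes\Q$ (respectively $\PP_*\otimes\Q$) and is therefore already a combination of Betti (respectively Betti and Pontryagin) numbers. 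There is no need to split $L$ or to invoke Theorem~\ref{t:main2} and~\cite{Chern} separately afterwards.

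This direct route is exactly what the paper does: it proves (Theorems~\ref{t:CHtop1} and~\ref{t:CHtop2}) that the ideal $\J\subset\CH_*\otimes\Q$ of differences of homeomorphic projective varieties equals $\ker(F\colon\CH_*\otimes\Q\to\Po_*\otimes\Q)$, and that the oriented ideal $\JJ$ equals $\ker(\tilde F\colon\CH_*\otimes\Q\to\PP_*\otimes\Q)$. The concrete ingredient that makes this go, and that your sketch does not identify, is the special rational basis sequence $\beta_1=\C P^1,\beta_2,\beta_3,\ldots$ for $\Om_*\otimes\Q$ from~\cite[Theorems~9 and 10]{Chern}, in which each $\beta_i$ with $i\geq 2$ is \emph{itself} a difference of homeomorphic projective varieties. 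Plugging this basis into the presentation of $\CH_*\otimes\Q$ in Theorem~\ref{t:CHring} gives $\ker(F)\subset\J$ immediately; the reverse inclusion is trivial. For the oriented case one also needs that $\beta_i$ and $\beta_1\beta_{i-1}$ for odd $i\geq 3$, as well as $E\cdot\beta_j$ for all $j\ge 2$, are differences of orientation-preservingly diffeomorphic varieties---the first two from~\cite{Chern}, the last via the $s$-cobordism argument already used in Theorem~\ref{t:Hdiff}. Your list of examples (``products of $\C P^1$ with the surface examples and with the higher-dimensional constructions'') gestures towards this but misses the organising principle: the hard geometric input lives in the bordism ring and is packaged by that basis sequence, not by ad hoc products.
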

As always, the Hodge numbers are considered modulo the K\"ahler symmetries.
This Theorem is a common generalization of Theorem~\ref{t:main2} for the Hodge numbers and the main theorems of~\cite{PNAS,Chern} for 
the Chern numbers. Once again complex dimension $2$ has to be excluded in the statement about unoriented diffeomorphism invariants because 
the signature is a diffeomorphism invariant of algebraic surfaces by~\cite[Theorem~6]{BLMSorient}, see also~\cite[Theorem~1]{JTop}.
We do not state this theorem for congruences, since we are unable to prove it
if the modulus $m$ is divisible by $2$ or $3$; compare Remark~\ref{r:cong} in Section~\ref{s:Hgen}.

\subsection*{Dedication}

We dedicate this work to the memory of F.~Hirzebruch, who first formulated the main problems treated here and who was one of the 
principal creators of their mathematical context.
He read the preprint version of our solution, but sadly passed away before its publication in print.
We are fortunate to have been influenced by him. 

\section{The Poincar\'e ring}\label{s:Pring}

In the introduction we defined the Poincar\'e ring by taking $\Z$-linear combinations of oriented equidimensional 
manifolds, and identifying two such linear combinations if they have the same Betti numbers and dimensions. 
Elements of this ring can be identified with their Poincar\'e polynomials
$$
P_{t,z}(M) = (b_0(M)+b_1(M)\cdot t+\ldots+b_n(M)\cdot t^n)\cdot z^n \in \Z [t,z] \ ,
$$
where the $b_i(M)$ are the real Betti numbers of $M$. 
Here we augment the usual Poincar\'e polynomial using an additional variable $z$ in order to keep track of the 
dimension in linear combinations where the top-degree Betti number may well vanish. In this way we obtain
an embedding of the Poincar\'e ring into $\Z [t,z]$. This embedding preserves the grading 
given by $\deg (t)=0$ and $\deg (z)=1$.

The Betti numbers satisfy the Poincar\'e duality relations 
$$
b_i(M)=b_{n-i}(M) \  \textrm{for all} \  i \ , \  \textrm{ and} \ \ \   b_{n/2}(M)\equiv 0 \mod 2 \  \ \  \textrm{if} \ \  \ n\equiv 2 \mod 4 \ .
$$
Not every polynomial having this symmetry and satisfying the obvious constraints $b_i(M)\geq 0$ 
and $b_0(M)=1$ can be realized by a connected manifold. For example, it is known classically that 
$(1+t^k+t^{2k})z^{2k}$ cannot be realized if $k$ is not a power of $2$; cf.~\cite[Section~2]{Hunpub}.
We sidestep this issue by modifying the definition of the Poincar\'e ring in the following way,
replacing it by a potentially larger ring with a more straightforward definition. 
%It will turn out that the two definitions agree.

Let $\Po_n$ be the $\Z$-module of {\it all} formal augmented Poincar\'e polynomials 
$$
P_{t,z} = (b_0+b_1\cdot t+\ldots+b_n\cdot t^n)\cdot z^n \in \Z [t,z] \ ,
$$
satisfying the duality condition $b_i=b_{n-i}$ for all $i$ and $b_{n/2}\equiv 0 \pmod 2$ if $n\equiv 2 \pmod 4$, regardless 
of whether they can be realized by manifolds. 
One could show directly that all elements of $\Po_n$ are $\Z$-linear combinations of Poincar\'e polynomials of closed 
orientable $n$-manifolds, thereby proving that this definition of $\Po_n$ coincides with the one given in the introduction. 
We will not do this here, but will reach the same conclusion later on, see Remark~\ref{r:gen} below.

For future reference we note the following obvious statement.
\begin{lem}\label{l:basis}
The $\Z$-module $\Po_n$ is free of rank $[(n+2)/2]$, spanned by the following basis:
$$
e_k^n = (t^k+t^{n-k})z^n \quad \textrm{for} \quad 0\leq k < n/2 \ ,
$$
and, if $n$ is even, 
$$
e_{n/2}^n = t^{n/2}z^n \quad \textrm{if} \quad n\equiv 0 \pmod 4 \ ,
$$
respectively
$$
e_{n/2}^n = 2t^{n/2}z^n \quad \textrm{if} \quad n\equiv 2 \pmod 4 \ .
$$
\end{lem}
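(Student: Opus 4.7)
The plan is a direct linear-algebra verification. First I would check that each proposed basis element lies in $\Po_n$. For $0\leq k<n/2$ the polynomial $e_k^n=(t^k+t^{n-k})z^n$ satisfies the duality $b_i=b_{n-i}$ by construction and is supported away from the middle degree, so the mod~$2$ condition never applies. For even $n$, the element $e_{n/2}^n$ is concentrated in the middle degree: when $n\equiv 0\pmod 4$ no additional constraint is imposed and $t^{n/2}z^n$ is admissible, whereas when $n\equiv 2\pmod 4$ the coefficient of $t^{n/2}$ must be even, which forces the factor of $2$ in the definition.

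Linear independence over $\Z$ is immediate since the supports of the different basis vectors are disjoint subsets of $\{t^0z^n,\ldots,t^nz^n\}$, and these monomials are themselves $\Z$-linearly independent in $\Z[t,z]$. For spanning, an arbitrary element $P=(b_0+b_1t+\ldots+b_nt^n)z^n\in\Po_n$ can be written as
$$
P=\sum_{0\leq k<n/2}b_k\,e_k^n+c\cdot e_{n/2}^n \ ,
$$
where the last term appears only for $n$ even, with $c=b_{n/2}$ when $n\equiv 0\pmod 4$ and $c=b_{n/2}/2$ when $n\equiv 2\pmod 4$; the latter value is a well-defined integer precisely by the parity constraint built into $\Po_n$. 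Counting the basis elements gives $\lfloor n/2\rfloor+1=[(n+2)/2]$ when $n$ is even and $(n+1)/2=[(n+2)/2]$ when $n$ is odd, matching the claimed rank.

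The lemma contains no genuine obstacle; the only subtlety worth flagging is the normalization $2t^{n/2}z^n$ in the case $n\equiv 2\pmod 4$, since choosing $t^{n/2}z^n$ there would yield only a proper index-$2$ sublattice of $\Po_n$ rather than all of it.
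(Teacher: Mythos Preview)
Your argument is correct and is precisely the routine verification the paper omits, since the lemma is stated there without proof as an ``obvious statement.'' One small slip in your closing remark: replacing $2t^{n/2}z^n$ by $t^{n/2}z^n$ when $n\equiv 2\pmod 4$ would not produce a sublattice of $\Po_n$---that element does not even lie in $\Po_n$---but rather a superlattice in which $\Po_n$ sits with index~$2$; this does not affect the proof itself.
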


We define  the Poincar\'e ring by
$$
\Po_* =  \bigoplus_{n=0}^{\infty} \Po_n\subset\Z[t,z] \ .
$$
This is a graded ring whose addition and multiplication correspond to the disjoint union and the Cartesian 
product of manifolds, and the grading, induced by the degree in $\Z[t,z]$ with $\deg (t)=0$ and $\deg(z)=1$, 
corresponds to the dimension.

The structure of the Poincar\'e ring is completely described by the following:
\begin{thm}\label{t:Pstructure}
Let $W$, $X$, $Y$ and $Z$ have degrees $1$, $2$, $3$ and $4$ respectively. The oriented Poincar\'e ring $\Po_*$ is isomorphic, 
as a graded ring, to the quotient of the polynomial ring $\Z[W,X,Y,Z]$ by the homogeneous ideal $\II$ generated by 
$$
WX-2Y \ , \quad X^2-4Z \ , \quad XY-2WZ \ , \quad Y^2-W^2Z \ . 
$$
\end{thm}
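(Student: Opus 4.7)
The plan is to construct an explicit graded ring homomorphism $\phi\colon\Z[W,X,Y,Z]\to\Po_*$, verify that it kills the ideal $\II$, and then identify the induced map $\bar\phi\colon R:=\Z[W,X,Y,Z]/\II\to\Po_*$ as an isomorphism by comparing graded ranks degree by degree. The natural choice of generators is
\[
\phi(W)=(1+t)z,\quad \phi(X)=2tz^2,\quad \phi(Y)=(t+t^2)z^3,\quad \phi(Z)=t^2z^4,
\]
each of which lies in $\Po_1,\Po_2,\Po_3,\Po_4$ respectively (the extra parity condition defining $\Po_2$ is exactly why the factor $2$ must appear in $\phi(X)$). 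Direct multiplication gives $\phi(WX)=2\phi(Y)$, $\phi(X^2)=4\phi(Z)$, $\phi(XY)=2\phi(W)\phi(Z)$, and $\phi(Y^2)=\phi(W)^2\phi(Z)$, so $\phi$ factors through $\II$.

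Next I will show that $R_n$ is spanned as a $\Z$-module by three families of \emph{normal-form} monomials $W^aZ^d$, $XZ^d$, and $W^aYZ^d$ (with $a,d\geq 0$). Treating the four generators of $\II$ as rewriting rules $X^2\to 4Z$, $Y^2\to W^2Z$, $XY\to 2WZ$, and $WX\to 2Y$, the weight $2b+c$ attached to a monomial $W^aX^bY^cZ^d$ strictly decreases under each rule (by $4$, $2$, $3$, and $1$ respectively), so iterated rewriting terminates. A monomial is irreducible precisely when it has one of the three listed forms, and counting them in degree $n$ gives exactly $[(n+2)/2]$ monomials, in bijection with pairs $(a,b)$ with $a+2b=n$ and $a,b\geq 0$.

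The final step is to verify that $\bar\phi$ sends these normal forms to a $\Z$-basis of $\Po_n$, which simultaneously delivers surjectivity and injectivity (any surjection from a $k$-generated $\Z$-module onto a free module of rank $k$ is an isomorphism). Up to the common factor $z^n$, the images are the polynomials $(1+t)^{n-2b}t^b$ for $b=0,\dots,\lfloor n/2\rfloor$, with an extra factor of $2$ inserted when $b=n/2$ and $n\equiv 2\pmod 4$: this is exactly where the type-B normal form $XZ^d$ contributes. Because $(1+t)^{n-2b}t^b$ has $t^b$ as its lowest monomial with coefficient $1$, expressing these in the $\Z$-basis $\{e_k^n\}$ of Lemma~\ref{l:basis}, ordered by increasing $b$ and $k$, produces a lower-triangular transition matrix with ones on the diagonal, so it is unimodular.

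The main obstacle is the matching at the middle of the palindrome when $n\equiv 2\pmod 4$: one must check that the non-middle images $(1+t)^{n-2b}t^bz^n$ with $b<n/2$ already lie in $\Po_n$, i.e., have even central coefficient. That coefficient equals $\binom{n-2b}{n/2-b}=\binom{2m}{m}$ with $m=\tfrac{n-2b}{2}\geq 1$, and the elementary identity $\binom{2m}{m}=2\binom{2m-1}{m-1}$ supplies the required evenness. This arithmetic input is what forces the defining relations $WX=2Y$ and $X^2=4Z$, rather than their over-$\Q$ inverses $Y=WX/2$ and $Z=X^2/4$ which would otherwise collapse $R$ to $\Z[W,X]$; it is also the essential subtlety that keeps $\Po_*$ from being a polynomial ring over~$\Z$.
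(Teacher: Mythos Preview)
Your proof is correct and follows the same overall architecture as the paper's: the same homomorphism $\phi$, the same verification that $\II\subset\ker\phi$, and the same normal-form monomials $W^aZ^d$, $XZ^d$, $W^aYZ^d$ spanning $R_n$. The difference lies in how surjectivity is established. The paper proves surjectivity of $P$ separately, by an induction on $n$ that expresses each basis element $e_k^n$ as an explicit product of lower-degree elements (Lemma~\ref{l:surj}), and only afterwards invokes the normal-form count (Lemma~\ref{l:gen}) together with the rank of $\Po_n$ to force injectivity. You instead compute the images of the normal-form monomials directly as $(1+t)^{n-2b}t^bz^n$, observe that the transition matrix to the basis $\{e_k^n\}$ is lower-triangular with unit diagonal, and thereby obtain surjectivity and injectivity in one stroke. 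Your route is somewhat cleaner: it bypasses the inductive case analysis entirely and makes the role of the parity obstruction at $n\equiv 2\pmod 4$ more transparent through the identity $\binom{2m}{m}=2\binom{2m-1}{m-1}$. The paper's inductive argument, on the other hand, is more elementary in that it never needs to analyze a change-of-basis matrix or invoke that binomial identity. Your termination argument for the rewriting system via the weight $2b+c$ is also a nice touch not present in the paper, which simply argues case by case that the relations allow reduction to the stated normal forms.
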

\begin{proof}
Define a homomorphism of graded rings
$$
P\colon \Z[W,X,Y,Z]\longrightarrow\Po_*
$$
by setting
$$
P(W) = (1+ t)z \ , \quad
P(X) = 2tz^2 \ , \quad
P(Y) = (t+ t^2)z^3 \ , \quad
P(Z) = t^2z^4 \ .
$$
By definition, $P$ vanishes on $\II$, and so induces a homomorphism from the quotient $\Z[W,X,Y,Z]/\II$ to $\Po_*$. 
We will show that this induced homomorphism is an isomorphism. The first step is to prove surjectivity.
\begin{lem}\label{l:surj}
The homomorphism $P$ is surjective.
\end{lem}
\begin{proof}
If $n\equiv 0 \pmod 4$, then $e_{n/2}^n = P(Z^{n/4})$. Similarly, if $n\equiv 2 \pmod 4$, then $e_{n/2}^n = P(XZ^{(n-2)/4})$.
Thus we only have to prove that $e_k^n$ is in the image of $P$ for all $k<n/2$. We do this by induction on $n$.

It is easy to check explicitly that $P$ is surjective in degrees $\leq 4$. Therefore, for the induction we fix some $n\geq 5$,
and we assume that surjectivity of $P$ is true in all degrees $<n$.

Consider first the case when $n$ is even. Then for $k<n/2$ we have the following identity:
$$
e_k^n = e_0^{\frac{n}{2}-k} \cdot e_k^{\frac{n}{2}+k} - 2t^{\frac{n}{2}}z^n \ .
$$
By the induction hypothesis the two factors $e_0^{\frac{n}{2}-k}$ and $e_k^{\frac{n}{2}+k}$ are
in the image of $P$. Since we have already noted that $2t^{\frac{n}{2}}z^n$ is in the image of $P$, we conclude that 
$P$ is surjective in degree $n$.

Finally, assume that $n$ is odd. In this case we have
\begin{alignat*}{1}
e_k^n &= (1+t)z \cdot \big(\sum_{i=0}^{n-2k-1}(-1)^it^{k+i}\big)z^{n-1}\\
&=(1+t)z \cdot \big(\sum_{i\neq\frac{n-1}{2}-k}(-1)^it^{k+i}\big)z^{n-1}+(-1)^{\frac{n-1}{2}-k}(1+t)z \cdot t^{\frac{n-1}{2}}z^{n-1} \ .
\end{alignat*}
Here $(1+t)z=P(W)$ by definition, and the induction hypothesis tells us that 
$$
\big(\sum_{i\neq\frac{n-1}{2}-k}(-1)^it^{k+i}\big)z^{n-1}
$$
with $i$ running from $0$ to $n-2k-1$ is in the image of $P$.

On the one hand, if $n\equiv 1 \pmod 4$, then $t^{\frac{n-1}{2}}z^{n-1}=P(Z^{(n-1)/4})$. On the other hand, if $n\equiv 3 \pmod 4$, then we rewrite 
$$
(1+t)z \cdot t^{\frac{n-1}{2}}z^{n-1} = (t+t^2)z^3\cdot t^{\frac{n-3}{2}}z^{n-3} = P(YZ^{(n-3)/4}) \ .
$$
This completes the proof of surjectivity of $P$ in all degrees.
\end{proof}

The next step in the proof of the theorem is to estimate the rank of the degree $n$ part of the quotient $\Z[W,X,Y,Z]/\II$.

\begin{lem}\label{l:gen}
The degree $n$ part of the quotient $\Z[W,X,Y,Z]/\II$ is generated as a $\Z$-module by at most $[(n+2)/2]$ elements.
\end{lem}
\begin{proof}
A generating set is provided by the images of the monomials $W^iX^jY^kZ^l$ with $i+2j+3k+4l=n$. 
The relations $X^2=4Z$ and $Y^2=W^2Z$ from the definition of $\II$ mean that we only have to consider $j=0$ or $1$ and $k=0$ or $1$. 
Further, since $XY=2WZ$, we do not need any monomials where $j=k=1$. Finally, since $WX=2Y$, we may assume $i=0$ whenever
$j=1$. Thus, a generating set for the degree $n$ part of the quotient $\Z[W,X,Y,Z]/\II$ is given by the images of the 
monomials $W^iZ^l$, $XZ^l$ and $W^iYZ^l$.

Assume first that $n-2$ is not divisible by $4$. In this case there is no monomial of the form $XZ^l$ of degree $n$. The number 
of monomials of the form $W^iZ^l$ is $[(n+4)/4]$, and the number of monomials of the form $W^iYZ^l$ is $[(n+1)/4]$. The sum
of these two numbers is $[(n+2)/2]$, since we assumed that $n$ is not congruent to $2$ modulo $4$.

If $n\equiv 2 \pmod 4$, then there is exactly one monomial of the form $XZ^l$ of degree $n$, and in this case 
$$
1+\big[ \frac{n+4}{4}\big] +\big[ \frac{n+1}{4}\big] = \big[ \frac{n+2}{2}\big] \ .
$$
This completes the proof of the Lemma.
\end{proof}

To complete the proof of the Theorem, consider the homomorphism of graded rings 
$$
\Z[W,X,Y,Z]/\II\longrightarrow\Po_*
$$
induced by $P$. By Lemma~\ref{l:surj} this is surjective. Now $\Po_n$ is free of rank $[(n+2)/2]$ by Lemma~\ref{l:basis},
and the degree $n$ part of $\Z[W,X,Y,Z]/\II$, which surjects to $\Po_n$, is generated as a $\Z$-module by $[(n+2)/2]$
elements, according to Lemma~\ref{l:gen}. This is only possible if the degree $n$ part of $\Z[W,X,Y,Z]/\II$ is also free,
and the surjection is injective, and, therefore, an isomorphism.
\end{proof}

\begin{rem}\label{r:gen}
The generators $W$, $X$, $Y$ and $Z$ satisfy the following:
\begin{alignat*}{1}
P(W) &= P_{t,z}(S^1) \ ,\\
P(X) &= P_{t,z}(S^1\times S^1)-P_{t,z}(S^2) \ ,\\
P(Y) &= P_{t,z}(S^1\times S^2)-P_{t,z}(S^3) \ ,\\
P(Z) &= P_{t,z}(S^2\times S^2)-P_{t,z}(\C P^2) \ .
\end{alignat*}
This shows that the definition of the Poincar\'e ring used in this section gives the same ring as the one defined in the 
introduction. Indeed all elements of $\Po_*$ as defined here are $\Z$-linear combinations of Poincar\'e polynomials of 
closed orientable manifolds, and one can take $S^1$, $S^2$, $S^3$ and $\C P^2$ as generators. The generators 
$W$, $X$, $Y$ and $Z$ have the advantage of giving a simpler form for the relations generating the ideal $\II$.
\end{rem}

Theorem~\ref{t:Pstructure} has the following immediate implication, showing that away from the prime $2$ the oriented 
Poincar\'e ring is in fact a polynomial ring.
\begin{cor}\label{c:Ptensor}
Let $k$ be a field of characteristic $\neq 2$. Then $\Po_*\otimes k$ is isomorphic to a polynomial ring $k[W,X]$ on two 
generators of degrees $1$ and $2$ respectively. For the generators one may take $S^1$ and $S^2$.
\end{cor}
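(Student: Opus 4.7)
My plan is to deduce this corollary directly from the explicit presentation of $\Po_*$ given by Theorem~\ref{t:Pstructure}. Tensoring the isomorphism $\Po_* \cong \Z[W,X,Y,Z]/\II$ with $k$ gives a presentation of $\Po_*\otimes k$ by the same four relations $WX-2Y$, $X^2-4Z$, $XY-2WZ$, $Y^2-W^2Z$, now considered over $k$.

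Since $\mathrm{char}(k)\neq 2$, the element $2$ is invertible in $k$, and the first two relations allow me to eliminate $Y$ and $Z$: solving $WX-2Y=0$ gives $Y = WX/2$, and solving $X^2-4Z=0$ gives $Z = X^2/4$. I would then check that the remaining two relations become trivial under this substitution, namely
\begin{align*}
XY - 2WZ &= X\cdot\tfrac{WX}{2} - 2W\cdot\tfrac{X^2}{4} = 0 \ ,\\
Y^2 - W^2Z &= \tfrac{W^2X^2}{4} - W^2\cdot\tfrac{X^2}{4} = 0 \ .
\end{align*}
It follows that the quotient $k[W,X,Y,Z]/\II$ is isomorphic to the polynomial ring $k[W,X]$, with $W$ in degree $1$ and $X$ in degree $2$.

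It remains to see that one may replace $X$ by $S^2$ as a generator. From Remark~\ref{r:gen} I have $P(W)=P_{t,z}(S^1)$, and a direct computation gives
$$
P_{t,z}(S^2) = (1+t^2)z^2 = (1+t)^2z^2 - 2tz^2 = P(W)^2 - P(X) \ .
$$
Thus in $\Po_*$ the class of $S^2$ equals $W^2-X$, so $X = W^2 - S^2$. Substituting this identity in $k[W,X]$ yields $k[W,X]=k[S^1,S^2]$, completing the identification.

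There is no serious obstacle here: the whole argument is a short calculation once Theorem~\ref{t:Pstructure} is in hand. The only point requiring any care is checking that the last two relations are indeed consequences of the first two after inverting $2$, which is the small computation displayed above, and that the change of generators from $(W,X)$ to $(S^1,S^2)$ really is a polynomial change of variables (which it is, since the substitution is triangular with invertible leading coefficient).
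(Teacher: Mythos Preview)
Your argument is correct and is precisely the computation the paper leaves implicit: the corollary is stated there as an ``immediate implication'' of Theorem~\ref{t:Pstructure} without proof, and your elimination of $Y$ and $Z$ after inverting $2$, followed by the triangular change of variables $X=W^2-S^2$, is exactly how one fills in the details.
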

Since products of $S^1$ and $S^2$ have vanishing Pontryagin numbers, Corollary~\ref{c:Ptensor} 
implies that there are no universal $\Q$-linear relations between Betti and Pontryagin numbers. 
This result also follows, in a less direct way, from~\cite[Corollary~3]{Crelle}. 
The corresponding statement for congruences between integral linear combinations is slightly more subtle, 
and depends on the integral structure of the Poincar\'e ring.
\begin{cor}\label{c:BPoriented}
Any non-trivial congruence between an integral linear combination of Betti numbers of oriented manifolds and an integral linear 
combination of Pontryagin numbers is a multiple of the mod $2$ congruence between the Euler characteristic and the signature.
\end{cor}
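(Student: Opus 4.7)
Suppose $L(M)\equiv P(M)\pmod m$ holds for every closed oriented $n$-manifold $M$, where $L$ is an integer linear combination of Betti numbers and $P$ an integer linear combination of Pontryagin numbers. The plan is to reduce first to a mod-$2$ congruence via orientation-reversal arguments, and then to show that the only such mod-$2$ relation is a scalar multiple of $\chi\equiv\sigma\pmod 2$.

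If $n\not\equiv 0\pmod 4$ then $P\equiv 0$ identically, so no genuinely mixed relation exists. Assume $n=4k$. Since Pontryagin classes depend only on the underlying real tangent bundle while the fundamental class flips under orientation reversal, $P(-M)=-P(M)$. Applying the hypothesis both to $M$ and to $-M$ gives $2P(M)\equiv 0\pmod m$, and evaluating on the null-bordant manifold $M\sqcup(-M)$ (on which $P$ vanishes while $L$ doubles) gives $2L(M)\equiv 0\pmod m$. For odd $m$ this immediately forces $L\equiv P\equiv 0\pmod m$. For $m=2m'$ even, using that $\Po_n$ and (modulo torsion) $\Omega^{SO}_n$ are free $\Z$-modules, I can write $L=m'\tilde L$ and $P=m'\tilde P$ for some integer Betti and Pontryagin combinations $\tilde L$, $\tilde P$, reducing the hypothesis to $\tilde L\equiv\tilde P\pmod 2$.

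The core step is then the mod-$2$ classification. On one hand, $\chi\equiv\sigma\pmod 2$ is universal on closed oriented $4k$-manifolds: by Poincar\'e duality, $\chi\equiv b_{2k}\pmod 2$, and the signature of the nondegenerate symmetric intersection form on middle cohomology has the same parity as its rank $b_{2k}$. On the other hand, to see that this is the only such relation up to an $\mathbb{F}_2$-scalar, I would evaluate $\tilde L\equiv\tilde P\pmod 2$ on products of $S^1$, $S^2$ and $S^3$ (on which all Pontryagin numbers vanish, so $\tilde L$ must vanish mod~$2$ on the $\mathbb{F}_2$-submodule of $\Po_n/2$ they span) and then on $\mathbb{C}P^{2k}$ (whose Pontryagin numbers couple non-trivially to $\tilde P$). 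Using Theorem~\ref{t:Pstructure} to analyse $\Po_n\otimes\mathbb{F}_2=\mathbb{F}_2[W,X,Y,Z]/(WX,X^2,XY,Y^2+W^2Z)$, I expect the submodule generated by these sphere products to have codimension one, with the missing direction detected precisely by $b_{2k}\pmod 2$; matching $\tilde L$ and $\tilde P$ on $\mathbb{C}P^{2k}$ then identifies the relation as an $\mathbb{F}_2$-multiple of $\chi\equiv\sigma\pmod 2$.

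The main obstacle I foresee is this last ring-theoretic computation: verifying that products of $S^1$, $S^2$ and $S^3$ in $\Po_n\otimes\mathbb{F}_2$ fill out the right codimension-one subspace. The mod-$2$ relations $WX=X^2=XY=0$ and $Y^2=W^2Z$ make the monomial bookkeeping slightly delicate, but the techniques used in the proof of Theorem~\ref{t:Pstructure} should suffice.
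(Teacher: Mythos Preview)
Your orientation-reversal reduction to the mod~$2$ case is a nice idea, and with one correction your overall strategy can be made to work. The gap is precisely where you anticipated it: the codimension-one claim for pure products of spheres is false. In $\Po_8\otimes\mathbb F_2$, for instance, the products $(S^1)^a(S^2)^b(S^3)^c$ with $a+2b+3c=8$ span only the $3$-dimensional subspace $\langle W^8,\,W^5Y,\,W^4Z\rangle$ inside the $5$-dimensional $\Po_8\otimes\mathbb F_2$; both $WYZ$ and $Z^2$ are missed (e.g.\ $(S^2)^4=W^8-8W^5Y+24W^4Z-32WYZ+16Z^2$, so only $16Z^2$ ever appears, and this vanishes mod~$2$). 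Hence a single evaluation on $\C P^{2k}$ cannot pin down $\tilde L$ mod~$2$.

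The fix is to use not the subring generated by $S^1,S^2,S^3$ but the \emph{ideal} they generate in $\Po_*$, i.e.\ all products $M\times S^j$ with $M$ an arbitrary oriented manifold; all such products have vanishing Pontryagin numbers. Since $S^1=W$, $(S^1)^2-S^2=X$ and $S^1\times S^2-S^3=Y$ in $\Po_*$, this ideal is $(W,X,Y)$, and one computes $\Po_*/(W,X,Y)\cong\Z[Z]/(4Z)$ (the relation $X^2=4Z$ forces $4Z=0$ in the quotient). After your mod-$2$ reduction this gives the desired rank-one quotient, detected by $\chi\pmod 2$. This is essentially the paper's argument, except that the paper bypasses your preliminary reduction altogether by also using $S^4=W^4-4WY+2Z$: modulo $(W,X,Y)$ this contributes $2Z$, cutting $\Z[Z]/(4Z)$ down to $\Z[Z]/(2Z)$ directly for arbitrary modulus~$m$. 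One minor side remark: your claim that $P=m'\tilde P$ with $\tilde P$ an \emph{integer} combination of Pontryagin numbers need not hold (the $p_I$ do not span the full dual lattice of $\Omega^{SO}_{4k}/\mathrm{tors}$), but this is harmless, since all you actually need is that $P/m'$ is $\Z$-valued and vanishes on manifolds with trivial Pontryagin numbers.
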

Here, as always, the Betti numbers are considered modulo the symmetry induced by Poincar\'e duality. Non-trivial congruences
are those in which the two sides do not vanish separately.
\begin{proof}
A linear combination of Betti numbers of oriented $n$-manifolds that is congruent mod $m$ to a linear combination of Pontryagin numbers
corresponds to a homomorphism $\varphi\colon\Po_{n}\longrightarrow \Z_m$ that vanishes on all manifolds with zero Pontryagin numbers.
Consider the generating elements $W$, $X$, $Y$ and $Z$ of $\Po_*$ in Theorem~\ref{t:Pstructure}.
In terms of these elements, the $4$-sphere satisfies
$$
S^4=W^4-4WY+2Z \in\Po_* \ .
$$
Since any product with $W$, $X$, $Y$ or $S^4$ as a factor has vanishing Pontryagin numbers, Theorem~\ref{t:Pstructure}
together with this relation implies that the homomorphism $\varphi$ descends to the degree $n$ part of the quotient $\Z[Z]/2Z$. 
Now the mod $2$ reduction of the Euler characteristic induces an isomorphism between $\Z[Z]/2Z$ and $\Z_2[z^4]$.
Furthermore, the Euler characteristic is congruent mod $2$ to the signature, which is a linear combination of Pontryagin numbers by 
the work of Thom. This completes the proof.
\end{proof}

\begin{rem}
Proceeding as above, one can define the unoriented Poincar\'e ring using $\Z_2$-Poincar\'e polynomials of manifolds that 
are not necessarily orientable. It is easy to see that this ring is a polynomial ring over $\Z$, isomorphic to $\Z[\R P^1,\R P^2]$.
\end{rem}

\section{The Hodge ring}\label{s:rings}

To every closed K\"ahler manifold of complex dimension $n$ we associate its Hodge polynomial
$$
H_{x,y,z}(M) = \big( \sum_{p,q=0}^{n}h^{p,q}(M)\cdot x^p y^q\big)\cdot z^n \  \in \  \Z[x,y,z] \ ,
$$
where the $h^{p,q}(M)$ are the Hodge numbers satisfying the K\"ahler constraints $h^{q,p}=h^{p,q}=h^{n-p,n-q}$.
Like with the Poincar\'e polynomial, we have augmented the Hodge polynomial by the introduction of the additional
variable $z$, which ensures that the Hodge polynomial defines an embedding of the Hodge ring $\Ho_*$ defined
in the introduction into the polynomial ring $\Z[x,y,z]$. This embedding preserves the grading if we set $\deg (x)=\deg (y)=0$\and $\deg (z)=1$.

The Hodge polynomial refines the Poincar\'e polynomial in the sense that if one sets $x=y=t$ and collects terms, 
the Hodge polynomial reduces to the Poincar\'e polynomial. (At the same time one has to replace $z$ by
$z^2$ since the real dimension of a K\"ahler manifold is twice its complex dimension.)

Unlike in the definition used in the introduction, we now define $\Ho_n$ to be the $\Z$-module of {\it all} polynomials
$$
H_{x,y,z} =\big(\sum_{p,q=0}^{n} h^{p,q}\cdot x^p y^q\big)\cdot z^n \  \in \  \Z[x,y,z] 
$$
satisfying the constraints $h^{q,p}=h^{p,q}=h^{n-p,n-q}$. We will prove in Corollary~\ref{c:Hodge} below that all
elements of $\Ho_n$ are in fact $\Z$-linear combinations of Hodge polynomials of compact K\"ahler manifolds 
of complex dimension $n$, so that this definition agrees with the one in the introduction.

\begin{lem}\label{l:Hbasis}
The $\Z$-module $\Ho_n$ is free of rank $[(n+2)/2]\cdot [(n+3)/2]$.
\end{lem}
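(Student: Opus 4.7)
The plan is to exhibit an explicit $\Z$-basis of $\Ho_n$ indexed by the orbits of an elementary group action, and then count these orbits.

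First I would observe that the two Kähler symmetries $h^{q,p}=h^{p,q}$ and $h^{p,q}=h^{n-p,n-q}$ are the defining relations of an action of the Klein four-group $G=(\Z/2)^2$ on the index set $I_n=\{0,\dots,n\}\times\{0,\dots,n\}$, where the generators act by $\sigma_1\colon(p,q)\mapsto(q,p)$ and $\sigma_2\colon(p,q)\mapsto(n-p,n-q)$, with composite $\sigma_3=\sigma_1\sigma_2\colon(p,q)\mapsto(n-q,n-p)$. An element of $\Ho_n$ is then exactly a function $h\colon I_n\to\Z$ that is constant on $G$-orbits, multiplied by the monomial $z^n \sum x^py^q$. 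Consequently, letting $\mathcal{O}$ run over the $G$-orbits on $I_n$, the polynomials
\[
E_{\mathcal{O}} \;=\; z^n\cdot\sum_{(p,q)\in\mathcal{O}} x^p y^q
\]
lie in $\Ho_n$, are $\Z$-linearly independent because their supports in $\Z[x,y,z]$ are pairwise disjoint, and span $\Ho_n$ because any admissible $h$ is the sum of its values on the orbits. This reduces the lemma to counting the $G$-orbits on $I_n$.

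Next I would count the orbits via Burnside's lemma. The fixed-point sets are transparent:
\[
|I_n^{\mathrm{id}}|=(n+1)^2,\quad |I_n^{\sigma_1}|=n+1,\quad |I_n^{\sigma_3}|=n+1,
\]
while $|I_n^{\sigma_2}|$ equals $1$ if $n$ is even and $0$ if $n$ is odd, since the only potential fixed point is $(n/2,n/2)$. Writing $\varepsilon=1$ for $n$ even and $\varepsilon=0$ for $n$ odd, Burnside gives
\[
\#(\text{orbits}) \;=\; \tfrac{1}{4}\bigl((n+1)^2+2(n+1)+\varepsilon\bigr) \;=\;\tfrac{1}{4}\bigl((n+1)(n+3)+\varepsilon\bigr).
\]

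Finally I would check by a short case analysis on the parity of $n$ that this matches $[(n+2)/2]\cdot[(n+3)/2]$. For $n$ even, $[(n+2)/2]=[(n+3)/2]=(n+2)/2$ so the claimed rank is $(n+2)^2/4$, which agrees with $\tfrac14((n+1)(n+3)+1)=\tfrac14(n^2+4n+4)$. For $n$ odd, $[(n+2)/2]\cdot[(n+3)/2]=\tfrac{n+1}{2}\cdot\tfrac{n+3}{2}$, matching $\tfrac14(n+1)(n+3)$ with $\varepsilon=0$. Neither step presents a real obstacle; the only thing to be careful about is that, unlike in the Poincaré case treated in Lemma~\ref{l:basis}, there is no analogue of the mod-$2$ congruence forcing a factor of $2$ on any basis element, so every $E_{\mathcal{O}}$ enters with coefficient $1$.
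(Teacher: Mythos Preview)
Your proof is correct. The paper's own proof is a single sentence asserting that ``it is straightforward to write down a module basis'' of the stated cardinality, with the implicit fundamental domain $\{(p,q):0\le q\le p,\ p+q\le n\}$ mentioned in the introduction; your argument is a fully worked-out version of the same underlying observation, organized via Burnside's lemma rather than a direct count of lattice points in a fundamental domain. The Burnside approach has the minor advantage of handling the boundary cases (orbits of size $1$ or $2$) automatically, whereas a direct fundamental-domain count must track them separately, but conceptually the two routes are equivalent.
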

\begin{proof}
Given the constraints $h^{q,p}=h^{p,q}=h^{n-p,n-q}$, visualized in the Hodge diamond, it is straightforward to write down a module basis
for $\Ho_n$ with $[(n+2)/2]\cdot [(n+3)/2]$ elements.
\end{proof}

We define the Hodge ring by
$$
\Ho_* =  \bigoplus_{n=0}^{\infty} \Ho_n\subset\Z[x,y,z] \ .
$$
This is a commutative ring with a grading given by the degree. (Recall that the degrees or weights of $x$, $y$ and $z$ are $0$, $0$ and $1$ respectively.)
Multiplication corresponds to taking the Cartesian product of K\"ahler  manifolds, and the grading corresponds 
to the complex dimension. Its structure is completely described by the following:
\begin{thm} \label{t:structure}
Let $A$ and $B$ have degree one and $C$ have degree two.
The homomorphism
$$
H\colon \Z[A,B,C] \longrightarrow \Ho_*
$$
given by 
\[
H(A)=(1+xy)\cdot z\ \ ,\ \ H(B)=(x+y)\cdot z\ \  , \ \  H(C)=xy\cdot z^2 \ 
\] 
is an isomorphism of graded rings.
\end{thm}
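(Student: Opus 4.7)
The plan is to follow the blueprint of Theorem~\ref{t:Pstructure}: establish surjectivity of $H$ in each degree by induction on $n$, and then upgrade this to an isomorphism via a rank count. The images $H(A)$, $H(B)$, $H(C)$ manifestly satisfy the K\"ahler symmetries $h^{q,p}=h^{p,q}=h^{n-p,n-q}$, so $H$ lands in $\Ho_*$. Counting monomials, the degree-$n$ piece of $\Z[A,B,C]$ (with $\deg A=\deg B=1$ and $\deg C=2$) is free of rank equal to the number of triples $(a,b,c)$ with $a+b+2c=n$, which is $\sum_{c=0}^{\lfloor n/2\rfloor}(n-2c+1)=[(n+2)/2]\cdot[(n+3)/2]$; this coincides with the rank of $\Ho_n$ given by Lemma~\ref{l:Hbasis}. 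Since both source and target are free $\Z$-modules of the same finite rank in each degree, a surjection in each degree is automatically an isomorphism, so the real task is to prove surjectivity.

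For that I would fix the natural $\Z$-basis of $\Ho_n$ consisting of symmetrized monomials $e_{p,q}^{(n)}$, indexed by the fundamental domain $\{(p,q):0\leq q\leq p,\,p+q\leq n\}$ for the $\Z_2\times\Z_2$-action generated by $(p,q)\mapsto(q,p)$ and $(p,q)\mapsto(n-p,n-q)$, and argue by induction on $n$ (after checking $n\leq 2$ by hand). In the \emph{interior case} $q\geq 1$, the orbit check yields the clean identity $H(C)\cdot e_{p-1,q-1}^{(n-2)}=e_{p,q}^{(n)}$, placing $e_{p,q}^{(n)}$ in the image by the inductive hypothesis. In the \emph{boundary case} $q=0$ with $p\leq n-2$, the identity $H(A)\cdot e_{p,0}^{(n-1)}=e_{p,0}^{(n)}+k\cdot e_{p+1,1}^{(n)}$ holds for some positive integer $k\in\{1,2\}$ (with $k=2$ only in the collapsed sub-case $p=n-2$, where the target orbit of $(n-1,1)$ has size $2$ rather than $4$), and since the right summand lies in the image by the interior case, so does $e_{p,0}^{(n)}$. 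The two remaining corners are handled separately: $H(A)\cdot e_{n-1,0}^{(n-1)}$ equals $e_{n-1,0}^{(n)}$ on the nose, coming from the anti-diagonal element $(x^{n-1}+y^{n-1})z^{n-1}$, while the expansion of $H(B)^n=(x+y)^nz^n$ produces $e_{n,0}^{(n)}=(x^n+y^n)z^n$ plus a combination of basis elements $e_{n-j,j}^{(n)}$ with $j\geq 1$, all of which fall under the interior case.

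The main technical obstacle is exactly this case split on the $q=0$ boundary and the anti-diagonal corners: no single multiplication rule produces all of them, and each sub-case requires a careful check that the orbits of the K\"ahler symmetry group in $\Ho_{n-1}$ or $\Ho_{n-2}$ line up with the orbits in $\Ho_n$ after multiplication by the appropriate generator, the mild multiplicity anomalies $k\in\{1,2\}$ at orbit collapses (along the diagonal $p=q$ or the anti-diagonal $p+q=n$) being absorbed into the inductive argument. Once all cases are settled, surjectivity in each degree follows, and the rank match from Lemma~\ref{l:Hbasis} upgrades it to the claimed isomorphism $\Z[A,B,C]\cong\Ho_*$.
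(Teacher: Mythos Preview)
Your proposal is correct and the case analysis goes through as you describe; in particular the key identity $H(C)\cdot e_{p-1,q-1}^{(n-2)}=e_{p,q}^{(n)}$ holds cleanly because multiplication by $xyz^2$ shifts each monomial in an orbit for $\Ho_{n-2}$ to the corresponding monomial in the matching orbit for $\Ho_n$, with no multiplicity issues. The boundary and corner cases are also handled correctly, and the final step---that a surjection between free $\Z$-modules of equal finite rank is an isomorphism---is standard.

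However, the paper takes a genuinely different route. Rather than proving surjectivity by induction, the paper first proves \emph{injectivity}, and does so via reduction modulo every prime $p$: one shows that the induced map $\tilde H\colon\Z_p[A,B,C]\to\Z_p[x,y,z]$ is injective by a short minimal-degree argument (setting $y=0$ kills $\tilde H(C)$ and reduces the problem to the algebraic independence of $z$ and $xz$). Surjectivity then follows from the rank match together with the same mod-$p$ injectivity, which rules out any nontrivial elementary divisors in the cokernel. The paper's approach avoids all case analysis on the Hodge diamond and yields as a byproduct the useful Lemma~\ref{claim1}; your approach, by contrast, is constructive---it exhibits an explicit recursion for writing each basis element as a polynomial in $A,B,C$---and parallels the paper's own treatment of the Poincar\'e ring in Theorem~\ref{t:Pstructure}. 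Indeed the paper remarks just before its proof that your line of argument would also work, but opts for the alternative to illustrate a different viewpoint.
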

This result can be proved by an argument that parallels the one we used in the proof of Theorem~\ref{t:Pstructure}. We 
give a different proof, that illustrates a somewhat different point of view.
\begin{proof}
In order to prove the injectivity of $H$, we need to show that there is no nontrivial polynomial in $A$, $B$ and $C$ which maps to zero under $H$.
Since there is always a prime number $p$, such that the mod $p$ reduction of such a polynomial is nontrivial, the injectivity of $H$ follows from 
the following stronger statement:
\begin{lem}\label{claim1}
Let $p$ be a prime number. 
The mod $p$ reduction of the map $H$
\[
\tilde H\colon\Z_p[A,B,C] \longrightarrow \Z_p[x,y,z] \ ,
\] 
given by sending $A$, $B$ and $C$ to the mod $p$ reductions of $H(A)$, $H(B)$ and $H(C)$, is injective.
\end{lem}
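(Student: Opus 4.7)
The plan is to reduce the injectivity question to linear independence of certain one-variable polynomials. Observe that by making the substitution $u = x+y$, $v = xy$, we embed $\Z_p[u,v,z]$ as the ring of $x,y$-symmetric polynomials inside $\Z_p[x,y,z]$, and the images
\[
\tilde H(A) = (1+v)z, \qquad \tilde H(B) = uz, \qquad \tilde H(C) = v\cdot z^2
\]
all lie in this subring. It therefore suffices to show that the monomials
\[
\tilde H(A)^i \tilde H(B)^j \tilde H(C)^k \;=\; (1+v)^i\,u^j\,v^k\,z^{i+j+2k},
\]
for $(i,j,k)\in \Z_{\geq 0}^3$, are $\Z_p$-linearly independent in $\Z_p[u,v,z]$.

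A linear dependence can be split according to the $z$-degree and the $u$-degree, since $u$, $v$, $z$ are algebraically independent. So first I would fix $n\geq 0$ and $j$ with $0\leq j\leq n$, set $m = n-j$, and consider the contribution from triples $(i,j,k)$ with $i+2k = m$. Cancelling the common factor $u^j z^n$, the task reduces to showing that the polynomials
\[
f_k(v) \;:=\; (1+v)^{m-2k}\,v^k, \qquad 0 \leq k \leq \lfloor m/2\rfloor,
\]
are $\Z_p$-linearly independent in $\Z_p[v]$.

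This last step is immediate: $f_k$ is monic of $v$-degree $(m-2k)+k = m-k$, and the map $k\mapsto m-k$ is injective on $\{0,1,\dots,\lfloor m/2\rfloor\}$, so the $f_k$ have pairwise distinct degrees. Triangularity of their leading terms yields linear independence over $\Z_p$ (indeed over any commutative ring), completing the argument.

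The conceptual obstacle to watch out for is that $\tilde H(A) = (1+xy)z$ and $\tilde H(C) = xy\cdot z^2$ look almost parallel after one forgets the $z$-variable, so straightforward algebraic independence of the three images in $\Z_p[x,y]$ is false; the grading given by $\deg z = 1$ (equivalently $\deg A = \deg B = 1$, $\deg C = 2$) is what does the real work. The coupling $i+2k = m$ within each graded piece is precisely what forces the $v$-degrees of the $f_k$ to be distinct. Since the argument is uniform in $p$, the lemma and hence injectivity of $H$ over $\Z$ follow at once.
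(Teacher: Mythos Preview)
Your proof is correct, but the route differs from the paper's. The paper argues by contradiction via induction on degree: a minimal-degree kernel element is written as $C\cdot Q(A,B,C)+R(A,B)$; specializing $y=0$ forces $R(z,xz)=0$ in $\Z_p[x,z]$, so $R=0$; then the integral-domain property of $\Z_p[x,y,z]$ lets one cancel $\tilde H(C)=xyz^2$ to obtain $Q\in\ker(\tilde H)$, contradicting minimality. Your argument instead passes to the symmetric-polynomial subring $\Z_p[u,v,z]$ with $u=x+y$, $v=xy$, where the images simplify to $(1+v)z$, $uz$, $vz^2$, and then proves linear independence of all monomial images directly by filtering on the $(z,u)$-bidegree and observing that the residual one-variable polynomials $(1+v)^{m-2k}v^{k}$ are monic of distinct degrees $m-k$. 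Both proofs are elementary; yours is more explicit and uniform (no induction), while the paper's is shorter and its specialization $y=0$ foreshadows the homomorphism $b$ used later in the treatment of birational invariants.
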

\begin{proof}
Suppose the contrary and let $n$ be the smallest degree in which $\tilde H$ is not injective.
Then $\ker(\tilde H)$ contains a nontrivial element of the form $C\cdot Q(A,B,C)+R(A,B)$, where $Q(A,B,C)$ and $R(A,B)$ 
are homogeneous polynomials with coefficients in $\Z_p$ of degrees $n-2$ and $n$ respectively. If we set $y=0$, we obtain 
$R(z,xz)=0$ in $\Z_p[x,z]$. Since $z$ and $xz$ are algebraically independent in $\Z_p[x,z]$, we conclude that the polynomial 
$R$ vanishes identically. Therefore, $C\cdot Q(A,B,C)\in\ker(\tilde H)$.
Since $\Z_p[x,y,z]$ is an integral domain in which $\tilde H(C)=xy\cdot z^2$ is a nontrivial element, 
we conclude that $Q(A,B,C)$ also lies in the kernel of $\tilde H$. This contradicts the minimality of $n$.
\end{proof}

It remains to prove the surjectivity of $H$. Counting the monomials in $A$, $B$, and $C$ of degree $n$ shows that the degree $n$ 
part of the graded polynomial ring $\Z[A,B,C]$ is a free $\Z$-module of rank $N=[(n+2)/2]\cdot [(n+3)/2]$.
By the injectivity of $H$, this is mapped isomorphically onto a submodule of $\Ho_n$, which by Lemma~\ref{l:Hbasis} is also a free 
$\Z$-module of rank $N$. Therefore, there are a basis $h_1,\ldots , h_N$ of $\Ho_n$ and non-zero integers $a_1,\ldots ,a_N$ such that 
$a_1h_1,\ldots, a_Nh_N$ is a basis of $\im(H)$. It remains to show that the integers $a_i$ are all equal to $\pm1$.
Suppose the contrary and let $p$ be a prime number which divides $a_i$.
Since $a_ih_i\in\im(H)$, this is the image of a polynomial $S(A,B,C)$. 
The mod $p$ reduction of $S$ must be nontrivial, since otherwise $a_ih_i/p$ would lie in the image of $H$.
However, the mod $p$ reduction of $a_ih_i$ vanishes by assumption, which is a contradiction with Lemma~\ref{claim1}.
This completes the proof of the theorem.
\end{proof}

From now on we use the isomorphism $H$ to identify $A$, $B$ and $C$ with their images in $\Ho_*$.
The following corollary paraphrases Theorem~\ref{t:Hstructure} stated in the introduction, and  
explains that instead of $A$, $B$ and $C$ one may choose different generators for $\Ho_*$.
Before we state it, note that by the Hodge index theorem the signature of manifolds induces a ring homomorphism 
$\sigma\colon\Ho_* \longrightarrow \Z[z]$, given by $x\mapsto -1$, $y\mapsto 1$.
\begin{cor}\label{c:Hodge}
Let $E$ be %(the Hodge polynomial of) 
an elliptic curve, $L$ %(the Hodge polynomial of) 
the projective line and let $S$ be an element in $\Ho_2$ with signature $\pm 1$. 
(For instance, $S$ might be %(the Hodge polynomial of) 
a K\"ahler surface with signature $\pm 1$.)
Then, $\Ho_*$ is isomorphic to the polynomial ring $\Z[E,L,S]$. 
\end{cor}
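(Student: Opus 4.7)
The plan is to leverage Theorem~\ref{t:structure}, which identifies $\Ho_*$ with $\Z[A,B,C]$ via $A=(1+xy)z$, $B=(x+y)z$, $C=xyz^2$. It suffices to show that $L$, $E$ and $S$ also freely generate $\Ho_*$ as a polynomial ring, and for this we just need to express $A$, $B$, $C$ as polynomials in $L$, $E$, $S$ (and check that the resulting substitution is invertible).

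First, reading off the Hodge diamonds of a projective line and of an elliptic curve immediately gives $H_{x,y,z}(L)=(1+xy)z=A$ and $H_{x,y,z}(E)=(1+x+y+xy)z=A+B$. Hence $A=L$ and $B=E-L$ already lie in the subring $\Z[L,E]\subset\Ho_*$.

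The key point is then to extract $C$ from $S$ using the signature. Apply the signature ring homomorphism $\sigma\colon\Ho_*\to\Z[z]$ given by $x\mapsto -1$, $y\mapsto 1$, and observe that $\sigma(A)=\sigma(B)=0$ while $\sigma(C)=-z^2$. By Theorem~\ref{t:structure}, the degree-two part $\Ho_2$ has $\{A^2, AB, B^2, C\}$ as a $\Z$-basis; writing $S=\alpha A^2+\beta AB+\gamma B^2+\delta C$ for unique integers $\alpha,\beta,\gamma,\delta$, multiplicativity of $\sigma$ gives $\sigma(S)=-\delta z^2$. The hypothesis $\sigma(S)=\pm z^2$ then forces $\delta=\mp 1$, so that
\[
C \;=\; \mp\bigl(S-\alpha L^2-\beta L(E-L)-\gamma(E-L)^2\bigr)\;\in\;\Z[L,E,S].
\]

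To conclude, the above shows $\Z[L,E,S]=\Z[A,B,C]=\Ho_*$ as subrings of $\Z[x,y,z]$. To see the generation is free, note that the graded endomorphism of $\Z[A,B,C]$ sending $A\mapsto A$, $B\mapsto A+B$, $C\mapsto \alpha A^2+\beta AB+\gamma B^2\pm C$ is upper triangular with unit diagonal in each degree, hence an automorphism; inverting it yields $\Z[L,E,S]\cong\Z[A,B,C]\cong\Ho_*$. Finally, $S$ may be taken to be an honest K\"ahler surface since, for instance, $\C P^2$ has signature $+1$. No serious obstacle arises; the main computation is the evaluation of $\sigma$ on the four degree-two monomials, which is immediate from the definitions.
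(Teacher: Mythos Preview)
Your proof is correct and follows essentially the same approach as the paper: identify $A=L$, $B=E-L$, and use the signature homomorphism (which kills $A$ and $B$ but has $\sigma(C)=-z^2$) to see that the coefficient of $C$ in the expansion of $S$ is $\pm 1$. Your version is slightly more explicit in spelling out the automorphism argument for freeness, but the key idea is identical.
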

\begin{proof}
First of all, note the identities $A=L$ and $B=E-L$, which allow us to replace the generators $A$ and $B$ in degree one by $E$ and $L$.
We may represent the element $S$ with respect to the basis $A^2$, $AB$, $B^2$ and $C$ of $\Ho_2$, given by Theorem~\ref{t:structure}.
It remains to show that in this representation, the basis element $C$ occurs with coefficient $\pm 1$.
Since $A$ and $B$ have zero signature and $C$ has signature $-1$, this is equivalent to $S$ having signature $\pm 1$, which is true by assumption. 
\end{proof}

\begin{rem}\label{r:definitions}
We have now proved that all formal Hodge polynomials are indeed $\Z$-linear combinations of Hodge polynomials of 
K\"ahler manifolds. This shows that the definition of $\Ho_*$ given at the beginning of this section gives the same ring
as the definition in the introduction, and it proves statement (3) from the introduction.
\end{rem}

The last Corollary also leads to the following result, which generalizes~\cite[Theorem~6]{PNAS}, proved there rather indirectly.
\begin{cor}\label{c:HP}
The mod $m$ reduction of a $\Z$-linear combination of Hodge numbers equals the mod $m$ reduction of a linear combination 
of Pontryagin numbers if and only if, modulo $m$, it is a multiple of the signature.
\end{cor}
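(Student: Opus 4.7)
The plan is to mirror the strategy used for Corollary~\ref{c:BPoriented}, exploiting the polynomial structure $\Ho_*=\Z[E,L,S]$ established in Corollary~\ref{c:Hodge}. The easy direction is immediate: by Hirzebruch's signature theorem, $\sigma$ is a linear combination of Pontryagin numbers, so any integer multiple of $\sigma$ reduces mod $m$ to the mod $m$ reduction of a Pontryagin linear combination.

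For the nontrivial direction, let $\varphi\colon\Ho_n\to\Z_m$ be the functional given by the prescribed Hodge linear combination, and suppose $\varphi\equiv\psi\pmod{m}$ on K\"ahler manifolds, where $\psi$ is a $\Z$-linear combination of Pontryagin numbers. The crucial first step is to show that $\psi$, and hence $\varphi$, vanishes on the intersection of the ideal $(E,L)\subset\Ho_*$ with $\Ho_n$. For this I would use that $E$ and $L$ are of real dimension two, so their total Pontryagin classes are trivial; by multiplicativity, $p(E\times X)=\pi_X^\ast p(X)$ for any K\"ahler manifold $X$, and the image of $\pi_X^\ast$ in $H^\ast(E\times X)$ does not reach the top degree. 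Hence every Pontryagin number of $E\cdot X$ vanishes, and the same argument applies to $L\cdot X$.

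Since $\sigma(E)=\sigma(L)=0$, the signature also kills the ideal $(E,L)$ and thus descends to the quotient $\Ho_*/(E,L)\cong\Z[S]$. In complex dimension $n$ this quotient is $\Z\cdot S^{n/2}$ if $n$ is even and $0$ otherwise, so in the odd case $\varphi$ and every multiple of $\sigma$ vanish trivially. In the even case, $\sigma(S^{n/2})=\sigma(S)^{n/2}=\pm 1$ is a unit in $\Z_m$, so I can pick an integer $c$ with $\varphi(S^{n/2})\equiv c\,\sigma(S^{n/2})\pmod{m}$. Since both $\varphi$ and $c\sigma$ annihilate $(E,L)$ and agree on the single generator of the quotient, they coincide as mod $m$ functionals on $\Ho_n$. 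The main technical point is the Pontryagin-number vanishing on products with $E$ and $L$, but this is really just the observation that two-real-dimensional manifolds have trivial total Pontryagin class; the remainder is bookkeeping furnished by the polynomial structure of Corollary~\ref{c:Hodge}.
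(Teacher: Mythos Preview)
Your proposal is correct and follows essentially the same route as the paper's proof: both use the polynomial description $\Ho_*=\Z[E,L,S]$ from Corollary~\ref{c:Hodge}, observe that any product with a complex curve has trivial Pontryagin numbers, and then identify the induced functional on the quotient $\Z[S]$ with a multiple of the signature via $\sigma(S)=\pm 1$. The paper compresses this into three sentences, while you spell out the Pontryagin-class vanishing and the mod~$m$ bookkeeping more explicitly, but there is no substantive difference in strategy.
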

\begin{proof}
If in complex dimension $2n$, a $\Z$-linear combination of Pontryagin numbers equals a linear combination of Hodge numbers, 
then it can be considered as a homomorphism $\varphi$ on $\Ho_{2n}$. The domain is spanned by products of $E$, $L$ 
and $S$, but any product with a complex curve as a factor has trivial Pontryagin numbers. Thus $\varphi$ factors through the 
projection $\Z[L,E,S]\longrightarrow\Z[S]$, which we can identify with the signature homomorphism, since the signature of $S$ is $\pm 1$. 
Conversely, the signature is a linear combination of Pontryagin numbers by the classical results of Thom.
\end{proof}

Returning to the generators $A$, $B$ and $C$ for $\Ho_*$ we can prove the following result, which implies
Theorem~\ref{t:Hbirat} stated in the introduction.
\begin{thm}\label{t:biratideal}
Let $\I\subset\Ho_*$ be the ideal generated by differences of birational smooth complex projective varieties. Then 
$\I = (C) = \ker (b)$, where  $C=xy\cdot z^2$ and $b\colon\Ho_*\longrightarrow\Z[y,z]$ is given by setting $x=0$
in the Hodge polynomials.
\end{thm}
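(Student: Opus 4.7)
The plan is to establish the two equalities $\I = (C)$ and $(C) = \ker(b)$ separately, with the middle inclusions being the ones that require actual geometric content.

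First I would pin down the algebraic equality $\ker(b) = (C)$. Under $b$, the generators of $\Ho_*=\Z[A,B,C]$ given by Theorem~\ref{t:structure} map as $A\mapsto z$, $B\mapsto yz$, $C\mapsto 0$. Since $z$ and $yz$ are algebraically independent in $\Z[y,z]$, the restriction of $b$ to the subring $\Z[A,B]$ is injective. Writing an arbitrary element of $\Z[A,B,C]$ as $\sum_{i\geq 0} P_i(A,B)\cdot C^i$, its image under $b$ is $P_0(z,yz)$, which vanishes iff $P_0=0$, iff the element lies in $(C)$. This gives $\ker(b)=(C)$.

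Next I would show $\I \subseteq (C)$. For any two birational smooth complex projective varieties $X$ and $Y$, the plurigenera and more generally the numbers $h^{0,q}=\dim H^0(X,\Omega^q)$ coincide; this is classical, as global holomorphic forms extend across subvarieties of codimension at least two, so they transfer through a common resolution of a birational map. Setting $x=0$ in the Hodge polynomial picks out precisely $\sum_q h^{0,q} y^q z^n$, so $b(H(X)-H(Y))=0$. Thus every generator of $\I$ lies in $\ker(b)=(C)$, and since $(C)$ is an ideal, $\I \subseteq (C)$.

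The remaining step, $(C)\subseteq\I$, is the place where one must produce an actual birational example realizing the class $C$. It suffices to show $C \in \I$, and for this I would use the blow-up of a smooth projective surface at a single point: the Hodge numbers change only in that $h^{1,1}$ increases by $1$, so for the concrete pair $\C P^2$ and $\blowup$ one has $H(\blowup)-H(\C P^2) = xy\cdot z^2 = C$. Since these two surfaces are birational, $C$ is a generator of $\I$, and hence $(C)\subseteq \I$. Combining the three steps gives $\I=(C)=\ker(b)$. I expect no genuine obstacle here; the only subtlety is the choice of an easy-to-compute birational pair realizing the generator $C$, for which the one-point blow-up of $\C P^2$ is optimal.
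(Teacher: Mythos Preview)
Your proof is correct and follows essentially the same route as the paper: a one-point blowup of a surface gives $C\in\I$, the classical birational invariance of the $h^{0,q}$ gives $\I\subset\ker(b)$, and injectivity of $b$ on $\Z[A,B]$ (via the algebraic independence of $b(A)=z$ and $b(B)=yz$) gives $\ker(b)=(C)$. One terminological quibble: the $h^{0,q}=\dim H^0(X,\Omega^q_X)$ are not a generalization of the plurigenera $P_m=\dim H^0(X,K_X^{\otimes m})$, so that phrase should be dropped, though the argument you actually use is the correct one.
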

\begin{proof}
If $S$ is a K\"ahler surface and $\hat S$ its blowup at a point, then $\hat S - S = C$, and so $(C)\subset\I$.

The homomorphism $b$ sends the Hodge polynomial in degree $n$ to $(h^{0,0}+h^{0,1}y+\ldots+h^{0,n}y^n)z^n$.
As the $h^{0,q}$ are birational invariants, cf.~\cite[p.~494]{GH}, we have $\I\subset\ker (b)$. From the proof of
Theorem~\ref{t:structure} we know already that there are no universal relations between the Hodge numbers, 
other than the ones generated by the K\"ahler symmetries, and so the image of $b$ in degree $n$ is a 
free $\Z$-module of rank $n+1$. Since $(C)\subset\ker (b)$, this means that $b$ maps $\Z[A,B]$ isomorphically
onto $\im (b)$, and so $(C)=\ker (b)$.
\end{proof}

This Theorem tells us exactly which linear combinations of Hodge numbers are birational invariants of projective 
varieties, or of compact K\"ahler manifolds. Indeed, any homomorphism $\varphi\colon\Ho_n\longrightarrow M$ of 
$\Z$-modules that vanishes on $\I\cap\Ho_n$ factors through $b$. This proves Theorem~\ref{t:Hbirat} stated in 
the introduction.

We already mentioned the homomorphism $\sigma\colon\Ho_*\longrightarrow\Z[z]$ given by the signature. It is a 
specialization (for $y=1$) of the Hirzebruch genus
$$
\chi\colon\Ho_*\longrightarrow\Z[y,z]
$$
defined by setting $x=-1$ in the Hodge polynomials. Consider a polynomial 
$$
(\chi_0+\chi_1 y+\ldots+\chi_n y^n   )\cdot z^n\in\im(\chi) \ .
$$
By Serre duality in $\Ho_n$, this must satisfy the constraint $\chi_p=(-1)^n\chi_{n-p}$. Let $\Hi_n$ be the 
$\Z$-module of all polynomials of the form $(\chi_0+\chi_1 y+\ldots+\chi_n y^n)z^n\in\Z[y,z]$ satisfying this constraint. 
It is clear that this is a free $\Z$-module of rank $[(n+2)/2]$, and that 
$$
\Hi_* =  \bigoplus_{n=0}^{\infty} \Hi_n\subset\Z[y,z] \ ,
$$
is a graded commutative ring. 
\begin{thm}\label{t:Hirz}
The Hirzebruch genus defines a surjective homomorphism $\chi\colon\Ho_*\longrightarrow\Hi_*$ of graded rings, whose kernel is the 
principal ideal in $\Ho_*$ generated by an elliptic curve. In particular $\Hi_*$ is a polynomial ring over $\Z$ with one 
generator in degree $1$ and one in degree $2$. As generators one may choose $\C P^1$ and $\C P^2$.
\end{thm}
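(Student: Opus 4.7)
The plan is to combine Corollary~\ref{c:Hodge}, which presents $\Ho_*=\Z[E,L,S]$, with a direct analysis of $\chi$ on these three generators. First I would check that $\chi$ maps into $\Hi_*$: substituting $x=-1$ into Serre duality $h^{p,q}=h^{n-p,n-q}$ yields $\chi_p=(-1)^n\chi_{n-p}$. Next, the Hodge polynomial of an elliptic curve factors as $H(E)=(1+x)(1+y)z$, so $\chi(E)=0$, and therefore the principal ideal $(E)\subset\Ho_*$ lies in $\ker(\chi)$. The theorem will follow once I establish that the induced ring map $\chi\colon\Z[L,S]\to\Hi_*$ is an isomorphism.

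To pin down the image, I would compute $\chi(L)=(1-y)z=:u$, and observe that by Serre duality $\chi(S)=(a+by+ay^2)z^2$ for some integers $a,b$. The assumption that $S$ has signature $\pm 1$, combined with the classical identity $\sigma=\chi|_{y=1}$, forces $2a+b=\pm 1$. Setting $v:=yz^2\in\Hi_2$, the identity $\chi(S)-au^2=(2a+b)v=\pm v$ shows that the image of $\chi|_{\Z[L,S]}$ contains both $u$ and $v$. Since $\chi(L)$ and $\chi(S)$ both lie in the subring $\Z[u,v]\subset\Z[y,z]$, the image is exactly $\Z[u,v]$.

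The main obstacle is the identification $\Z[u,v]=\Hi_*$, which I would settle degree-by-degree by a triangular change of basis. The module $\Hi_n$ is free of rank $[(n+2)/2]$, with a natural basis $\{b_i\}_{0\leq i\leq [n/2]}$ where $b_i=(y^i+(-1)^ny^{n-i})z^n$ for $i<n/2$, together with the middle element $b_{n/2}=y^{n/2}z^n$ when $n$ is even. The candidate basis on the other side is $\{u^{n-2k}v^k : 0\leq k\leq [n/2]\}$, and expanding $u^{n-2k}v^k=(1-y)^{n-2k}y^kz^n$ shows it has $y$-support exactly $[k,n-k]$ with coefficient $1$ at $y^k$. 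Writing $u^{n-2k}v^k=\sum_i c_{k,i}b_i$ one therefore gets $c_{k,i}=0$ for $i<k$ and $c_{k,k}=1$, so the matrix $(c_{k,i})$ is upper triangular with $1$'s on the diagonal, hence invertible over $\Z$. Consequently the $u^{n-2k}v^k$ form a $\Z$-basis of $\Hi_n$.

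Putting this together, $\chi|_{\Z[L,S]}$ is a surjective ring homomorphism between graded free $\Z$-modules of the same finite rank $[(n+2)/2]$ in each degree, and is therefore an isomorphism. In particular $\Hi_*$ is a polynomial ring on generators of degrees $1$ and $2$. The concrete choice $S=\C P^2$ (signature $+1$) gives $\chi(\C P^2)=u^2+v$, so $\C P^1$ and $\C P^2$ may serve as polynomial generators. Finally, the decomposition $\Ho_*=\bigoplus_{k\geq 0}E^k\cdot\Z[L,S]$, together with $\chi(E)=0$ and the isomorphism on the $k=0$ summand, forces $\ker(\chi)=E\cdot\Ho_*=(E)$, concluding the proof.
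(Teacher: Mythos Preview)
Your proof is correct and follows the same overall strategy as the paper: identify $\Ho_*=\Z[E,L,S]$ via Corollary~\ref{c:Hodge}, observe $\chi(E)=0$, and then show that the induced map $\Z[L,S]\to\Hi_*$ is an isomorphism. The paper is content to invoke the proof of Theorem~\ref{t:structure} for this last step (or to gesture at ``elementary manipulations using $\chi(\C P^1)=(1-y)z$ and $\chi(\C P^2)=(1-y+y^2)z^2$''), whereas you actually carry out those manipulations via the auxiliary element $v=yz^2$ and a clean upper-triangular change-of-basis argument; this makes the argument self-contained and is a nice addition.
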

\begin{proof}
It is clear that $\chi$ is a homomorphism of graded rings, and that elliptic curves are in its kernel. Identifying $\Ho_*$ with 
$\Z[E,\C P^1, \C P^2]$, the Hirzebruch genus factors through the projection $\Z[E,\C P^1, \C P^2]\longrightarrow\Z[\C P^1, \C P^2]$,
and we have to show that the induced homomorphism $\Z[\C P^1, \C P^2]\longrightarrow\Hi_*$ is an isomorphism. 
This follows from the proof of Theorem~\ref{t:structure}, where we showed that there are no unexpected relations between the Hodge 
numbers. In particular, there are no non-trivial relations between the coefficients $\chi_0, \chi_1, \ldots , \chi_{[n/2]}$.
Alternatively one can show that $\Z[\C P^1, \C P^2]\longrightarrow\Hi_*$ is an isomorphism by elementary manipulations 
using $\chi (\C P^1)=(1-y)z$ and $\chi (\C P^2)=(1-y+y^2)z^2$. 
\end{proof}

\begin{rem}
With coefficients in $\Q$, it is well known that the image of the Hirzebruch genus is a polynomial ring on the images of $\C P^1$ and $\C P^2$. 
That this also holds over $\Z$ was recently made explicit in~\cite[Remark~7.1]{S}. There, as everywhere
in the literature, the Hirzebruch genus is identified with the Todd genus on the complex bordism ring using the Hirzebruch--Riemann--Roch
theorem. However, by its very definition, it should be considered on the Hodge ring instead, which is a much simpler object than the 
bordism ring, and in particular is finitely generated. By HRR, the two interpretations give the same image, since the bordism ring is 
generated, over $\Z$, by K\"ahler manifolds, compare Subsection~\ref{ss:bordism} below.
\end{rem}

Theorem~\ref{t:Hirz} tells that there are no universal relations between the Hodge and Chern numbers other than the 
Hirzebruch--Riemann--Roch relations:
\begin{cor}\label{c:univ}
The mod $m$ reduction of a $\Z$-linear combination of Hodge numbers of smooth complex projective varieties equals a 
linear combination of Chern numbers if and only if, mod $m$ and modulo K\"ahler symmetries, it is a linear combination of the $\chi_p$.
\end{cor}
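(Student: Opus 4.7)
The ``if'' direction is immediate from the Hirzebruch--Riemann--Roch theorem: each $\chi_p$ is a $\Z$-linear combination of Chern numbers, so any integral combination of the $\chi_p$ (reduced modulo the K\"ahler symmetries) gives a combination of Chern numbers. For the converse, the plan is to interpret a $\Z$-linear combination of Hodge numbers in complex dimension $n$ as a homomorphism $\varphi\colon\Ho_n\longrightarrow\Z_m$, and to translate the hypothesis into the statement that $\varphi$ agrees on every smooth complex projective variety of complex dimension $n$ with the mod $m$ reduction of some fixed linear combination of Chern numbers. The goal is then to show that $\varphi$ factors through the Hirzebruch genus $\chi\colon\Ho_n\longrightarrow\Hi_n$; by the very definition of $\Hi_n$ the induced map is a $\Z_m$-linear combination of the coefficients $\chi_p$, modulo the Serre duality $\chi_p=(-1)^n\chi_{n-p}$, which is already one of the K\"ahler symmetries.

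By Theorem~\ref{t:Hirz} the kernel of $\chi$ is the principal ideal $(E)$ generated by an elliptic curve, so it suffices to show that $\varphi$ vanishes on $(E)\cap\Ho_n$. The geometric input is that the tangent bundle of $E$ is trivial; consequently $c(E\times X)=\pi_X^*c(X)$ for every smooth projective variety $X$. Hence every Chern monomial of complex top degree on $E\times X$ is a pullback from $X$ of strictly smaller degree and therefore integrates to zero, so all Chern numbers of $E\times X$ vanish. By the standing hypothesis, $\varphi$ then also vanishes mod $m$ on the Hodge polynomial of each such $E\times X$.

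To finish, I will invoke Corollary~\ref{c:Hodge}, which identifies $\Ho_*$ with the polynomial ring $\Z[E,\C P^1,\C P^2]$ on three projective generators; in particular $\Ho_{n-1}$ is spanned as a $\Z$-module by Hodge polynomials of smooth projective varieties $X$, and so $(E)\cap\Ho_n$ is spanned by Hodge polynomials of the projective products $E\times X$. The previous step then forces $\varphi$ to vanish on $(E)\cap\Ho_n$, yielding the desired factorization through $\chi$. The only point where some care is needed, and which I view as the main (if mild) obstacle, is precisely this reduction from abstract elements of $(E)$ to genuine smooth projective varieties to which the hypothesis may be applied; this is exactly what Corollary~\ref{c:Hodge} provides. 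Everything else is bookkeeping using the structure theorems already established.
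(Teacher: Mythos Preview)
Your proof is correct and follows essentially the same approach as the paper: both directions use HRR for the ``if'' part, and for the converse both use Theorem~\ref{t:Hirz} to identify $\ker\chi=(E)$ together with the vanishing of Chern numbers on products with an elliptic curve, so that the given linear form factors through $\chi$. Your write-up simply makes explicit the step (implicit in the paper) that Corollary~\ref{c:Hodge} lets one span $(E)\cap\Ho_n$ by Hodge polynomials of genuine projective varieties $E\times X$.
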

\begin{proof}
Since products with an elliptic curve as a factor have trivial Chern numbers, any linear combination of Hodge numbers 
that equals a combination of Chern numbers must factor through the projection $\Ho_*\longrightarrow\Ho_*/(E)$.
By Theorem~\ref{t:Hirz}, this projection is the Hirzebruch genus $\chi$. Conversely, by the Hirzebruch--Riemann--Roch
theorem, the coefficients of $\chi$ are expressed as linear combinations of Chern numbers via the Todd polynomials.
\end{proof}

\section{The comparison map and the Poincar\'e ring of K\"ahler manifolds}\label{s:comp}

In this section we analyse the comparison map
\begin{alignat*}{1}
f\colon \Ho_* &\longrightarrow\Po_*\\
x \longmapsto t \ \ , \ \ 
y &\longmapsto t \ \ , \ \
z \longmapsto z^2 \ 
\end{alignat*}
given by forgetting the K\"ahler structure on elements of $\Ho_*$, thus specializing Hodge polynomials to Poincar\'e 
polynomials. This map doubles the degree, since the real dimension of a K\"ahler manifold is twice its complex dimension. 
Here are the main properties of this homomorphism:
\begin{prop}\label{t:comp}
{\textrm 1.} The image of $f$ consists of all elements of $\Po_*\subset\Z[t,z]$ of even degree, whose coefficients of odd 
powers of $t$ are even.

\noindent
{\textrm 2.} The kernel of $f$ is a principal ideal in $\Ho_*$ generated by the following homogeneous element $G$ of degree $2$:
$$
G = 4\C P^2-3L^2+E^2-2EL \ .
$$
\end{prop}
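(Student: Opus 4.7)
The plan is to handle parts 1 and 2 together, working in the presentation $\Ho_* = \Z[A,B,C]$ from Theorem~\ref{t:structure}. First I would translate $G$ into this presentation: comparing Hodge polynomials gives $L = A$, $E = A+B$, and $\C P^2 = A^2 - C$, so a short calculation yields $G = B^2 - 4C$, or equivalently $G = (x-y)^2 z^2$ as a Hodge polynomial. Since $f(x) = f(y) = t$, this element is killed at once, establishing $(G) \subset \ker(f)$.

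For the image (part~1), necessity is straightforward: $f(z) = z^2$ doubles the degree, and the K\"ahler symmetry $h^{p,q} = h^{q,p}$ groups contributions to the coefficient of $t^k$ (namely $\sum_{p+q=k} h^{p,q}$) into pairs whenever $k$ is odd, since then $p = q$ is impossible. For sufficiency, given $(c_0 + c_1 t + \ldots + c_{2n} t^{2n})z^{2n} \in \Po_{2n}$ satisfying the parity condition, I would construct an explicit Hodge diamond realizing it by concentrating each anti-diagonal sum $c_k$, for $0 \le k \le n$, in one entry: $h^{k/2,k/2} = c_k$ for $k$ even, or $h^{(k-1)/2,(k+1)/2} = c_k/2$ (with its K\"ahler mirror) for $k$ odd. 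Serre duality $h^{p,q} = h^{n-p,n-q}$ then determines the entries with $p+q > n$, consistently because $c_k = c_{2n-k}$ by Poincar\'e duality.

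For the kernel (part~2), the nontrivial direction is $\ker(f) \subset (G)$. The relation $B^2 = 4C$ in $\Ho_*/(G)$ puts every class into a unique normal form $P(A,C) + B\cdot Q(A,C)$, so the quotient is a free $\Z[A,C]$-module with basis $\{1,B\}$. Applying $f$ sends such a class to
\[
P\bigl((1+t^2)z^2,\, t^2 z^4\bigr) \;+\; 2tz^2 \cdot Q\bigl((1+t^2)z^2,\, t^2 z^4\bigr),
\]
where the first summand involves only even powers of $t$ and the second only odd powers. These cannot cancel, so each must vanish separately; algebraic independence of $(1+t^2)z^2$ and $t^2 z^4$ in $\Z[t,z]$ (a short leading-term check after substituting $s = t^2$, using that $(1+s)^i s^j$ has $s$-degree $i+j$ with leading coefficient $1$) then forces $P = Q = 0$.

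I expect the main obstacle to be the bookkeeping of the normal-form decomposition and the algebraic independence check. The conceptual heart is the observation that the parity split of $t$-degrees in $\im(f)$ precisely mirrors the module decomposition $\Ho_*/(G) = \Z[A,C] \oplus B\cdot\Z[A,C]$; once this is set up, injectivity modulo $(G)$ follows at once, and part~1 can either be proved directly as above or deduced from part~2 by a rank count (both $\Ho_n/(G)$ and the claimed image sublattice of $\Po_{2n}$ have rank $n+1$).
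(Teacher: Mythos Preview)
Your argument is correct and complete. The main line of proof for part~2 genuinely differs from the paper's. The paper first establishes part~1 in full, then uses a rank count to deduce part~2: since $\im(f)$ in degree $2n$ is free of rank $n+1$ (by part~1) and $\Ho_n/(G)$ is generated by $n+1$ elements, the induced map $\hat f$ must be injective. Your route is instead a direct injectivity proof via the normal form $P(A,C)+B\,Q(A,C)$ and the even/odd parity split in powers of $t$; this makes part~2 logically independent of part~1. Your identification $G=B^2-4C=(x-y)^2z^2$ is a clean way to see $(G)\subset\ker(f)$ that the paper does not make explicit. What the paper's approach buys is brevity once part~1 is in hand; what yours buys is a self-contained argument for part~2 that also explains structurally why the kernel is principal.

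One small correction: your closing parenthetical, that part~1 could alternatively be \emph{deduced} from part~2 by a rank count, is not quite right over $\Z$. Knowing $\ker(f)=(G)$ tells you $\im(f)\cong\Ho_n/(G)$ is free of rank $n+1$, and the necessity direction places it inside the claimed sublattice $\Lambda\subset\Po_{2n}$, also of rank $n+1$; but equal ranks only give finite index, not equality. You still need the explicit surjectivity check (your direct construction, or the paper's) to conclude $\im(f)=\Lambda$. This does not affect your main argument, which proves part~1 directly.
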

\begin{proof}
In Section~\ref{s:rings} we defined $\Ho_*$ to be generated by all formal Hodge polynomials 
$$
\left(\sum_{p,q=0}^n h^{p,q}\cdot x^py^q\right)\cdot z^n \ ,
$$ 
in $\Z [x,y,z]$, satisfying the K\"ahler symmetries $h^{p,q}=h^{q,p}=h^{n-p,n-q}$.
Serre duality $h^{p,q}=h^{n-p,n-q}$ implies Poincar\'e duality for the image under $f$, whereas the symmetry $h^{p,q}=h^{q,p}$ 
implies that the image has even odd-degree Betti numbers.
Finally, since $f$ doubles the degree, its image is concentrated in even degrees.
Conversely, it is straightforward to check that the elements $e^{2n}_{n}$, $e^{2n}_k$ with even $k<n$, and 
$2e^{2n}_k$ with odd $k<n$ of $\Po_{2n}$ in Lemma~\ref{l:basis} are images of formal Hodge polynomials.
This establishes the first part of the Proposition.

For the second part, we note that $G = 4\C P^2-3L^2+E^2-2EL$
has zero Betti numbers and therefore lies in the kernel of $f$.
Thus $f$ induces a homomorphism $\hat f\colon\Ho_*/(G)\longrightarrow\Po_*$. By the first part of the proposition
proved above, the image of $f$, equivalently $\hat f$, in degree $2n$ is a free $\Z$-module of rank $n+1$. By 
Corollary~\ref{c:Hodge}, the degree $n$ part of $\Ho_*/(G)$ is generated as a $\Z$-module by $n+1$
elements. Therefore $\hat f$ is injective, and an isomorphism onto $\im (f)\subset\Po_*$. 
\end{proof}

By the first part of this proposition, a basis for $\Hom (f(\Ho_n),\Z)$ is given by the even-degree Betti numbers
and the halves of the odd-degree Betti numbers, both up to the middle dimension only because of Poincar\'e duality.
In particular, the only non-trivial congruences satisfied by the Betti numbers of K\"ahler manifolds are the vanishing
mod $2$ of the odd-degree Betti numbers.

Proceeding as in the definition of the Poincar\'e ring $\Po_*$ of oriented manifolds in Section~\ref{s:Pring}, we 
define the Poincar\'e ring of K\"ahler manifolds. This ring is the image of the comparison map $f$ in $\Po_*$.
Thus, Proposition~\ref{t:comp} yields:
\begin{thm} \label{t:KaehlerPring}
The Poincar\'e ring of K\"ahler manifolds $\im (f)$ is isomorphic to
$$
\Z[L,E,\C P^2]/(4\C P^2-3L^2+E^2-2EL) \ ,
$$
where $L=\C P^1$ is the projective line and $E$ an elliptic curve.
\end{thm}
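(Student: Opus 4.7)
The plan is to derive this theorem as an essentially immediate corollary of what has already been established, namely Corollary~\ref{c:Hodge} and Proposition~\ref{t:comp}. By definition the Poincar\'e ring of K\"ahler manifolds is the image $\im(f)\subset\Po_*$, so the natural route is to apply the first isomorphism theorem to $f$.

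First I would invoke Corollary~\ref{c:Hodge} with the specific choice $S=\C P^2$. Since $\C P^2$ is a K\"ahler surface of signature $+1$, it is an admissible choice for the degree-two generator, and so $\Ho_*\cong\Z[L,E,\C P^2]$ as graded rings. This rewriting is crucial because the relation in the target of the theorem is expressed in terms of $L$, $E$ and $\C P^2$ rather than the abstract generators $A$, $B$, $C$.

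Next I would appeal to part~2 of Proposition~\ref{t:comp}, which identifies $\ker(f)$ as the principal ideal generated by $G=4\C P^2-3L^2+E^2-2EL$. Combining this with the identification from Corollary~\ref{c:Hodge}, the first isomorphism theorem yields
$$
\im(f)\;\cong\;\Ho_*/\ker(f)\;\cong\;\Z[L,E,\C P^2]/(4\C P^2-3L^2+E^2-2EL),
$$
as graded rings, which is exactly the claimed description.

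There is no real obstacle here, since the two ingredients are already in place; the only thing to verify is that the isomorphism of Corollary~\ref{c:Hodge} sends the element $G$ viewed inside $\Ho_*$ to the displayed polynomial in the abstract generators $L$, $E$, $\C P^2$ of the polynomial ring $\Z[L,E,\C P^2]$, but this is tautological since that corollary is precisely the statement that the three classes $L$, $E$, $\C P^2$ freely generate $\Ho_*$. Thus the proof reduces to a one-line application of the first isomorphism theorem together with the two cited results.
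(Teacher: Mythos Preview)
Your proposal is correct and matches the paper's own approach exactly: the paper derives Theorem~\ref{t:KaehlerPring} in one line from Proposition~\ref{t:comp}, using the identification $\Ho_*\cong\Z[L,E,\C P^2]$ from Corollary~\ref{c:Hodge} (with $S=\C P^2$) together with $\ker(f)=(G)$ and the first isomorphism theorem, just as you describe.
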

Using this theorem, we can determine all universal relations between Betti and Pontryagin numbers of K\"ahler manifolds.
Since in odd complex dimensions there are no non-trivial Pontryagin numbers, we can restrict ourselves to even complex 
dimensions. In these dimensions, for K\"ahler manifolds only, Corollary~\ref{c:BPoriented} is strengthened as follows:
\begin{cor}\label{c:BPKaehler}
Any non-trivial congruence between an integral linear combination of Betti numbers of K\"ahler manifolds of even complex 
dimension $2n$ and an integral linear combination of Pontryagin numbers is a multiple of the following congruence between 
the Euler characteristic and the signature:
\begin{equation}\label{eq:sign}
e \equiv (-1)^n\sigma \ \ \mod 4 \ .
\end{equation}
\end{cor}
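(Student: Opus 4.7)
The plan is to adapt the strategy of Corollary~\ref{c:BPoriented} to the K\"ahler setting, using the explicit presentation of $\im(f)$ given by Theorem~\ref{t:KaehlerPring}. A non-trivial congruence $\alpha\equiv\beta\pmod m$, with $\alpha$ a $\Z$-linear combination of Betti numbers and $\beta$ of Pontryagin numbers, yields a homomorphism $\varphi:=\alpha\bmod m\colon\im(f)_{4n}\to\Z_m$ which coincides with $\beta\bmod m$ on every K\"ahler representative. The goal is to show $(\alpha,\beta)$ reduces modulo $m$ to an integer multiple of the pair $(e,(-1)^n\sigma)$, i.e.\ a multiple of (\ref{eq:sign}).

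The first step is to show $\varphi$ vanishes on $(E,L)\cap\im(f)_{4n}$. Since $TE$ is trivial and $p(T\C P^1)=1$ in $H^*(\C P^1)$ (because $c_1(T\C P^1)^2=4H^2=0$), the Pontryagin classes of any product $E\times X$ or $L\times X$ are pulled back from $X$, and a dimension count forces every top-degree Pontryagin monomial to pair trivially with the fundamental class. Hence $\beta$, and by the congruence $\varphi$, vanishes on $E\cdot\im(f)_{4n-2}+L\cdot\im(f)_{4n-2}$. Reducing the relation $G=4\C P^2-3L^2+E^2-2EL$ of Theorem~\ref{t:KaehlerPring} modulo $E$ and $L$ yields $\im(f)/(E,L)\cong\Z[\C P^2]/(4\C P^2)$, whose degree-$4n$ component is the cyclic group $\Z/4\Z$ generated by the class of $(\C P^2)^n$. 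Therefore $\varphi$ is determined by the single value $\lambda:=\varphi((\C P^2)^n)\in\Z_m$ satisfying $4\lambda\equiv 0\pmod m$.

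Finally I would exhibit $\varphi$ as a multiple of (\ref{eq:sign}). Evaluation gives $e((\C P^2)^n)=3^n$ and $\sigma((\C P^2)^n)=1$, and $3\equiv-1\pmod 4$ gives $e-(-1)^n\sigma\equiv 3^n-(-1)^n\equiv 0\pmod 4$ at $(\C P^2)^n$. For the universal validity of this congruence on every K\"ahler manifold of complex dimension $2n$, I would expand $e=\sum(-1)^{p+q}h^{p,q}$ and $\sigma=\sum(-1)^q h^{p,q}$ and pair complementary Hodge numbers via Serre duality $h^{p,q}=h^{2n-p,2n-q}$, which extracts a factor of two and makes the difference $e-(-1)^n\sigma$ divisible by $4$. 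Given any $\lambda$ with $4\lambda\equiv 0\pmod m$, an integer $c$ solving $c\cdot 3^n\equiv\lambda\pmod m$ exists because $3^n$ is a unit modulo $\gcd(m,4)$; the resulting congruence $ce\equiv c(-1)^n\sigma\pmod m$ is valid (since $m\mid 4c$) and matches $\varphi$ both on $(\C P^2)^n$ and on the ideal $(E,L)$, so $(\alpha,\beta)\equiv c\,(e,(-1)^n\sigma)\pmod m$ modulo trivial congruences. The main obstacle will be establishing the universal mod-$4$ congruence $e\equiv(-1)^n\sigma$ and verifying the compatible identity $ce\equiv c(-1)^n\sigma$ on $L\times X$ products (which hinges on the evenness of odd Betti numbers of K\"ahler manifolds, ensuring $e(L\times X)=2e(X)$ is divisible by $4/c$ when $m\mid 4c$); once these are in place, everything else reduces to bookkeeping in $\Z_m$.
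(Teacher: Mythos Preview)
Your approach is essentially the same as the paper's: both arguments use Theorem~\ref{t:KaehlerPring} together with the vanishing of Pontryagin numbers on products with a complex curve to factor $\varphi$ through the degree-$4n$ part of $\Z[\C P^2]/(4\C P^2)\cong\Z/4$, and then identify this quotient via the Euler characteristic and invoke the Hodge index theorem for the congruence $e\equiv(-1)^n\sigma\pmod 4$.

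The main difference is in the final step. The paper simply observes that $e\bmod 4$ gives an isomorphism $\Z[\C P^2]/(4\C P^2)\cong\Z_4[z^4]$, so that $\varphi$ automatically factors as $\psi\circ(e\bmod 4)$ for some homomorphism $\psi\colon\Z_4\to\Z_m$; this is what ``multiple of'' means, and no explicit integer $c$ is produced. You instead try to exhibit an integer multiplier $c$ with $3^nc\equiv\lambda\pmod m$ and $m\mid 4c$. This is correct but needs more care than you indicate: not every solution of $3^nc\equiv\lambda\pmod m$ satisfies $m\mid 4c$ (for instance $m=12$, $n=1$, $\lambda=3$ admits $c=1$, which fails), so you must choose $c$ in the $4$-torsion of $\Z_m$. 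Your phrase ``$3^n$ is a unit modulo $\gcd(m,4)$'' is exactly the reason such a $4$-torsion solution exists, but you should say so explicitly. The expression ``divisible by $4/c$'' is garbled; what you need is that $e(X)$ is even for $X$ of odd complex dimension (from the evenness of the middle Betti number), hence $ce(L\times X)=4c\cdot\tfrac{e(X)}{2}\equiv 0\pmod m$ once $m\mid 4c$. With these clarifications your argument goes through, but the paper's formulation avoids this bookkeeping entirely.
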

The word non-trivial in the formulation is meant to indicate that we ignore congruences where both sides vanish separately.
This is necessary because the odd-degree Betti numbers are all even.
\begin{proof}
The signature is a linear combination of Pontryagin numbers by the work of Thom. That it satisfies the 
congruence~\eqref{eq:sign} for compact K\"ahler manifolds follows from the Hodge index theorem.

Conversely, suppose we have a $\Z$-linear combination of Betti numbers that, on all K\"ahler manifolds of complex dimension 
$2n$, is congruent to a linear combination of Pontryagin numbers modulo $m$, but does not vanish identically mod $m$. 
Such a linear combination corresponds to a homomorphism $\varphi$ from the degree $4n$ part of the Poincar\'e 
ring of K\"ahler manifolds to $\Z_m$ that vanishes on all elements with zero Pontryagin numbers.
Since the Pontryagin numbers vanish on manifolds that are products with a complex curve as a factor, 
Theorem~\ref{t:KaehlerPring} shows that $\varphi$ factors through the degree $4n$ part of $\Z[\C P^2]/(4\C P^2)$.
Now the mod $4$ reduction of the Euler characteristic gives an isomorphism between $\Z[\C P^2]/(4\C P^2)$ and
$\Z_4[z^4]$. This completes the proof.
\end{proof}

Replacing the Pontryagin numbers by the Chern numbers of K\"ahler manifolds, we obtain the following:
\begin{cor}\label{c:BC}
A $\Z$-linear combination of Betti numbers of K\"ahler manifolds is congruent mod $m$ to a non-trivial 
linear combination of Chern numbers if and only if, mod $m$, it is a multiple of the Euler characteristic.
\end{cor}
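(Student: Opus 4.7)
The plan is to mirror the proof of Corollary~\ref{c:BPKaehler} for Pontryagin numbers, with the modifications forced by the fact that Chern numbers vanish on products with an elliptic curve but not on products with $\C P^1$. The easy direction is immediate, since the Euler characteristic is simultaneously $\sum(-1)^ib_i$ and the top Chern number $c_n$. For the converse, suppose a $\Z$-linear combination $\varphi$ of Betti numbers satisfies $\varphi\equiv\gamma\pmod m$ on every K\"ahler $n$-fold for some Chern combination $\gamma$. Then $\varphi$ gives a homomorphism from the degree-$2n$ part of the Poincar\'e ring $\im(f)$ of K\"ahler manifolds to $\Z_m$. Since the tangent bundle of an elliptic curve $E$ is trivial, any product $M\times E$ has all Chern numbers zero, so $\gamma(M\times E)=0$, and hence $\varphi$ mod $m$ vanishes on the ideal generated by $E$. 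By Theorem~\ref{t:KaehlerPring}, setting $E=0$ in the defining relation of $\im(f)$ gives
\[
\im(f)/(E)\;\cong\;\Z[L,\C P^2]/(4\C P^2-3L^2),
\]
through which $\varphi$ now factors.

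Next I would determine the $\Z$-module structure of the degree-$n$ (complex dimension $n$) part of this quotient. Multiplying the relation by $L^{n-2b-2}(\C P^2)^b$ yields $3L^{n-2b}(\C P^2)^b=4L^{n-2b-2}(\C P^2)^{b+1}$, and a B\'ezout argument using $\gcd\{4^{k-b}3^b:0\le b\le k\}=1$ (where $k=\lfloor n/2\rfloor$) shows the quotient is free of rank one. A convenient explicit generator is
\[
g=\begin{cases}(L^2-\C P^2)^{n/2},&n\text{ even,}\\ L\cdot(L^2-\C P^2)^{(n-1)/2},&n\text{ odd,}\end{cases}
\]
since the binomial expansion of $(4-3)^k=1$ realises the required B\'ezout identity. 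Because $\chi(L)=2$ and $\chi(\C P^2)=3$ give $\chi(L^2-\C P^2)=1$, the same expansion yields $\chi(g)=1$ for $n$ even and $\chi(g)=2$ for $n$ odd. For $n$ even this finishes the proof: $\chi$ generates the entire $\Z_m$-dual of the rank-one quotient, so every functional to $\Z_m$ is a mod-$m$ multiple of $\chi$.

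The main obstacle is the odd-dimensional case, where $\chi(g)=2$ so $\chi$-multiples cover only the subgroup $2\Z_m$ of the $\Z_m$-dual and additional input is needed to exclude odd values. The key observation is that for $n$ odd the chosen generator $g$ has an explicit factor of $L$, and now the constraint that $\varphi$ equals a Chern combination can be brought to bear. Since $c(L)=1+2a$ with $a^2=0$, expanding any top-degree Chern monomial on a product $L\times N$ and integrating kills the $a^0$-term (pulled back from $N$ of too low degree) and leaves only the $a^1$-terms, each of which carries a factor of $2$; hence $\gamma(L\times N)\in 2\Z$ for every integral Chern combination $\gamma$. By $\Z$-linearity, $\gamma(g)\in 2\Z$, so $\varphi(g)\equiv\gamma(g)\in 2\Z_m=\chi(g)\cdot\Z_m$, which exhibits $\varphi$ as a $\chi$-multiple mod $m$ and completes the argument. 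The crux is thus the interaction between the Chern constraint and the factor of $L$ in $g$, which together supply the parity that $\chi$ alone does not.
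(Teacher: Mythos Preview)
Your argument is correct and follows the same strategy as the paper: kill the elliptic-curve ideal to descend to $\Z[L,\C P^2]/(4\C P^2-3L^2)$, and then use that each graded piece of this quotient is free of rank one. The paper handles the last step more slickly by identifying the whole quotient ring, via $\chi$, with the subring of $\Z[z^2]$ generated by $2z^2$ and $z^4$, so that the projection from the K\"ahler Poincar\'e ring literally \emph{is} the Euler characteristic; it does not separate out the odd-$n$ case. Your version is more explicit---you exhibit the generator $g=(L^2-\C P^2)^{\lfloor n/2\rfloor}$ (with an extra $L$ when $n$ is odd) and compute $\chi(g)$ directly---and in odd dimensions you supply a genuine additional ingredient: the observation that every Chern number of $\C P^1\times N$ lies in $2\Z$, forcing $\gamma(g)\in 2\Z$ and hence $\varphi(g)\in\chi(g)\cdot\Z_m$. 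That parity step bridges the gap between $\chi(g)=2$ and the full $\Z_m$-dual of the rank-one piece, making the odd-dimensional case airtight where the paper's terse phrasing leaves this to the reader.
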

Again we do not consider congruences where the two sides vanish separately.
\begin{proof}
Since the Euler characteristic of a K\"ahler manifold equals the top Chern number $c_n$, one direction is clear. For the converse,
assume that, in complex dimension $n$, the mod $m$ reduction of some $\Z$-linear combination of Chern numbers equals a linear 
combination of Betti numbers. This corresponds to a non-trivial homomorphism from the degree $2n$ part of the Poincar\'e ring of K\"ahler 
manifolds to $\Z_m$. Since any product with an elliptic curve has trivial Chern numbers, Theorem~\ref{t:KaehlerPring} shows that 
this homomorphism descends to a homomorphism from the degree $2n$ part of $\Z[L,\C P^2]/(4\C P^2-3L^2)$ to $\Z_m$.
Upon identifying this ring with the subring of $\Z [z^2]$ generated by $2z^2$ and $z^4$, the projection from the Poincar\'e ring of K\"ahler 
manifolds to $\Z[L,\C P^2]/(4\C P^2-3L^2)$ is identified with the Euler characteristic, obtained by setting $t=-1$ in the 
Poincar\'e polynomials. This completes the proof.
\end{proof}

\section{The Hirzebruch problem for Hodge numbers}\label{s:HH}

In this section we solve Hirzebruch's problem concerning Hodge numbers by proving Theorem~\ref{t:main2} 
stated in the introduction.
The following is the first step in its proof.

\begin{thm}\label{t:ker}
The ideal in the Hodge ring $\Ho_*$ generated by the differences of homeomorphic smooth complex
projective varieties coincides with the kernel of the forgetful map $f\colon\Ho_*\longrightarrow\Po_*$.
\end{thm}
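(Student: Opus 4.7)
The easy direction of the theorem is immediate: if $X$ and $Y$ are homeomorphic smooth projective varieties, they share the same Betti numbers, so $f(X-Y)=0$. Hence the ideal generated by all such differences is contained in $\ker(f)$.

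For the reverse inclusion, I would invoke Proposition~\ref{t:comp}, which identifies $\ker(f)$ with the principal ideal $(G)$ generated by $G=4\C P^2-3L^2+E^2-2EL\in\Ho_2$. A direct computation, expanding the four summands as Hodge polynomials, collapses $G$ to $(x-y)^2z^2$. Since $G$ sits in degree two and any positive-degree element of $\Ho_*$ strictly raises degree upon multiplication, the degree-two part of the ideal in question is the $\Z$-span of the differences coming from homeomorphic pairs of \emph{surfaces}. It therefore suffices to realize $G$ itself as a $\Z$-linear combination of differences $X_i-Y_i$ of homeomorphic smooth projective surfaces.

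On a K\"ahler surface, the Hodge numbers are completely determined by the Betti numbers and the signature through the identities $b_1=2h^{1,0}$, $b_2=2p_g+h^{1,1}$, and $\sigma=2+2p_g-h^{1,1}$. A homeomorphism between two projective surfaces preserves Betti numbers but may reverse orientation. If it preserves orientation it also preserves signature, hence all Hodge numbers agree, and the difference in $\Ho_2$ vanishes. If it reverses orientation then $\sigma(Y)=-\sigma(X)$, and a short calculation shows that $b_2$ and $s=\sigma(X)$ must both be even, with $X-Y=(s/2)\cdot G$ in $\Ho_2$.

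The crux of the argument, and the main obstacle, is therefore to exhibit an orientation-reversing homeomorphic pair of smooth projective surfaces with $|\sigma|=2$, or failing that, a finite collection of such pairs whose half-signatures have greatest common divisor one. Such pairs are constructed in~\cite{MAorient}: simply connected smooth projective surfaces whose intersection forms are negatives of one another, so that by Freedman's classification of simply connected topological four-manifolds they are orientation-reversing homeomorphic. Choosing an appropriate pair or combination from that construction realizes $G$ as an element of the ideal generated by differences of homeomorphic projective varieties, which completes the proof.
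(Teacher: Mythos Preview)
Your proposal is correct and follows essentially the same strategy as the paper's proof: both reduce the problem, via Proposition~\ref{t:comp}, to realizing the degree-two generator $G$ of $\ker(f)$ as a $\Z$-linear combination of differences $X_i-Y_i$ of orientation-reversingly homeomorphic projective surfaces, and both appeal to~\cite{MAorient} for the requisite pairs. Your route is slightly more computational---you derive $G=(x-y)^2z^2$ and the identity $X-Y=(\sigma(X)/2)\cdot G$ explicitly---while the paper argues purely through the signature; but the substance is the same. Two small points worth tightening: first, your reduction to surface pairs silently uses that differences of homeomorphic varieties in degrees $0$ and $1$ already vanish in $\Ho_*$ (trivial, since the Hodge numbers of a curve are determined by its genus), so make that explicit; second, your final sentence should be sharpened to the precise input you need from~\cite{MAorient}, namely that one can choose pairs whose signatures have greatest common divisor exactly~$2$---the paper cites~\cite[Theorem~3.7]{MAorient} for this.
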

\begin{proof}
Let $\I\subset\Ho_*$ be the ideal generated by 
$$
\{ M-N \ \vert \ M, \ N \   \textrm{homeomorphic} \ \textrm{projective} \ \textrm{varieties}\ \textrm{of} \ \textrm{dimension} \ n \} \ ,
$$
for all $n$.
These are differences of smooth complex projective varieties of complex dimension $n$ that are homeomorphic,
without any assumption about compatibility of their orientations under homeomorphisms. 

Since Poincar\'e polynomials are homeomorphism invariants, it is clear that $\I\subset\ker (f)$. To prove $\ker (f)\subset\I$ we use
Proposition~\ref{t:comp}, telling us that $\ker (f)$ is a principal ideal generated by an element $G$ in degree $2$. This $G$ has the property
that all its Betti numbers vanish, and its signature equals $+4$. 
We only have to prove that $G\in\I$.

By the results of~\cite{MAorient} there are many pairs $(X,Y)$ of simply connected projective surfaces of non-zero signature
that are orientation-reversingly homeomorphic with respect to the orientations given by the complex structures. The only divisibility
condition that has to be satisfied in all cases is that the signatures must be even. More specifically, by~\cite[Theorem~3.7]{MAorient},
we can choose two such pairs $(X_1,Y_1)$ and $(X_2,Y_2)$ with the property that the greatest common divisor of the
signatures $\sigma (X_1)$ and $\sigma (X_2)$ is $2$. Then there are integers $a$ and $b$ such that 
\begin{equation}\label{eq:ab}
a\sigma (X_1)+b\sigma (X_2)=2 \ .
\end{equation}
We now claim that the following identity holds:
\begin{equation}\label{eq:id}
H_{x,y,z}(G) = a(H_{x,y,z}(X_1)-H_{x,y,z}(Y_1))+b(H_{x,y,z}(X_2)-H_{x,y,z}(Y_2)) \ .
\end{equation}
Since $X_i-Y_i\in\I$, this proves that $G\in\I$.

To prove~\eqref{eq:id} note that the Betti numbers vanish on both the left-hand and the right-hand sides. Therefore, to check that 
all Hodge numbers agree, we only have to check the equality of the signatures, as follows:
$$
\sigma (a(X_1-Y_1)+b(X_2-Y_2)) = 2\sigma (aX_1+bX_2)=4=\sigma (G) \ ,
$$
where the first equality comes from the fact that $X_i$ and $Y_i$ are orientation-reversingly homeomorphic and the second 
equality comes from~\eqref{eq:ab}. This completes the proof of the theorem.
\end{proof}

Next we consider differences of diffeomorphic, not just homeomorphic, projective varieties.
\begin{thm}\label{t:Hdiff}
In degrees $n\geq 3$ the kernel of $f\colon\Ho_n\longrightarrow\Po_{2n}$ is generated as a $\Z$-module by differences of 
diffeomorphic smooth complex projective varieties.

In all degrees the intersection $\ker (f)\cap\ker (\sigma )$  is generated as a $\Z$-module by differences of 
smooth complex projective varieties that are orientation-preservingly diffeomorphic with respect to
the orientations induced by the complex structures.
\end{thm}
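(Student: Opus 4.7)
Define $\mathcal{D}\subseteq\Ho_*$ (resp.\ $\mathcal{D}^+$) as the $\Z$-submodule spanned by the differences $[M]-[N]$ for pairs of (resp.\ orientation-preservingly) diffeomorphic smooth complex projective varieties. These submodules are in fact ideals of $\Ho_*$, because diffeomorphism of either kind is preserved under Cartesian product with an arbitrary projective variety. The containments $\mathcal{D}\subseteq\ker(f)$ and $\mathcal{D}^+\subseteq\ker(f)\cap\ker(\sigma)$ are immediate from the invariance of Betti numbers under diffeomorphism and of the signature under orientation-preserving diffeomorphism, so the work lies entirely in the reverse inclusions.

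I would start from the same pairs $(X_1,Y_1)$, $(X_2,Y_2)$ of orientation-reversingly homeomorphic simply-connected projective surfaces from~\cite{MAorient} that were used in the proof of Theorem~\ref{t:ker}, chosen so that $G=a(X_1-Y_1)+b(X_2-Y_2)$ in $\Ho_2$. The crucial external input is the following stabilization statement, already alluded to in the introduction: after taking the Cartesian product with any projective variety $V$ of positive complex dimension whose fundamental group has vanishing Whitehead group (in particular $V=L$, $V=E$, or any simply-connected $V$), the homeomorphism upgrades to an orientation-reversing diffeomorphism $X_i\times V\cong Y_i\times V$. This is obtained by crossing the intersection-form-induced $h$-cobordism between $X_i$ and $Y_i$ with $V$ and invoking the $s$-cobordism theorem in dimension $\geq 5$. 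Consequently $(X_i-Y_i)\cdot V$, and hence $G\cdot V$, lies in $\mathcal{D}$.

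For Part~1, I would specialise to $V=L$, $V=E$, and $V=S_0$, where $S_0$ is a simply-connected surface with $\sigma(S_0)=\pm 1$ (for example $\C P^2$), obtaining $GL, GE, GS_0\in\mathcal{D}$. By Corollary~\ref{c:Hodge}, $\Ho_*$ may be identified with the polynomial ring $\Z[E,L,S_0]$, and every monomial of positive degree is divisible by at least one of $E$, $L$, $S_0$; since $\mathcal{D}$ is an ideal, this forces $G\cdot\Ho_{n-2}\subseteq\mathcal{D}$ for every $n\geq 3$. Combining this with $\ker(f)=(G)$ from Proposition~\ref{t:comp} completes the first statement.

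For Part~2 the idea is a conjugation trick applied to the stabilized diffeomorphism. When $V$ has \emph{odd} complex dimension, the complex conjugate variety $\bar V$ is the same smooth manifold carrying the reverse orientation, while K\"ahler symmetry gives $h^{p,q}(\bar V)=h^{q,p}(V)=h^{p,q}(V)$, so $\bar V=V$ in $\Ho_*$. Reinterpreting the orientation-reversing map $X_i\times V\cong Y_i\times V$ as a map $X_i\times V\cong Y_i\times\bar V$ therefore produces an orientation-preserving diffeomorphism of projective varieties, placing $(X_i-Y_i)\cdot V\in\mathcal{D}^+$. Taking $V=L$ and $V=E$ yields $GL, GE\in\mathcal{D}^+$. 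A direct check in $\Z[E,L,S]$ shows that $\ker(\sigma)\cap\Ho_m$ equals $(L,E)\cap\Ho_m$ in every positive degree $m$, because the only monomials outside the ideal $(L,E)$ are the pure powers $S^k$, and these have nonzero signature. Hence $\ker(f)\cap\ker(\sigma)\cap\Ho_n=G\cdot(\ker(\sigma)\cap\Ho_{n-2})\subseteq (GL)\Ho_*+(GE)\Ho_*\subseteq\mathcal{D}^+$ for $n\geq 3$, and the cases $n\leq 2$ are trivial since $\ker(f)\cap\ker(\sigma)$ vanishes there. I expect the main obstacle to be the stabilization step, which requires careful control of both the smooth structure and the orientation behaviour of the resulting diffeomorphism; once that is in place, the rest is routine bookkeeping in the polynomial ring $\Ho_*$.
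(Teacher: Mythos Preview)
Your proof is correct and follows essentially the same approach as the paper: both express $G$ via the pairs $(X_i,Y_i)$ from~\cite{MAorient}, upgrade Wall's $h$-cobordism to a diffeomorphism after stabilizing by a positive-dimensional factor via the $s$-cobordism theorem, and then exploit the ideal structure of $(G)$ in $\Z[E,L,S]$. Your conjugation trick for the orientation-preserving statement is a minor but pleasant variant of the paper's argument, which instead composes with an orientation-reversing self-diffeomorphism of the curve factor $E$ or $\C P^1$; the two devices have the same effect.
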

\begin{proof}
By the proof of Theorem~\ref{t:ker}, the ideal $\ker (f)$ is generated by differences of pairs of homeo\-morphic 
simply connected algebraic surfaces $(X_i,Y_i)$. Identifying $\Ho_*$ with $\Z[E,\C P^1,\C P^2]$, we see that the kernel of 
$f\colon\Ho_n\longrightarrow\Po_{2n}$ is generated as a $\Z$-module by products of the $X_i-Y_i$ with $E$, $\C P^1$ and $\C P^2$.

By a result of Wall~\cite{W}, the smooth four-manifolds $X_i$ and $Y_i$ are smoothly $h$-cobordant. It follows that $X_i\times \C P^j$ 
and $Y_i\times\C P^j$ are also $h$-cobordant, and are therefore diffeomorphic by Smale's $h$-cobordism theorem~\cite{Smale}. 
Products of $X_i-Y_i$ with powers of $E$, therefore not involving a $\C P^j$, are handled by the following Lemma, which is a 
well-known consequence of the $s$-cobordism theorem of Barden, Mazur and Stallings; see~\cite[p.~41/42]{Ker}:
\begin{lem}\label{l:scob}
Let $M$ and $N$ be $h$-cobordant manifolds of dimension $\geq 5$. Then $M\times S^1$ and $N\times S^1$ are diffeomorphic.
\end{lem}
This shows that the products $X_i\times E$ and $Y_i\times E$ are diffeomorphic, completing the proof of the first statement.

For the second statement note that $\ker (f)\cap\ker (\sigma )$ vanishes in degrees $< 3$. Therefore we only have to
consider the degrees already considered in the first part. The generators considered there all have zero signature, 
except the products of $X_i-Y_i$ with pure powers of $\C P^2$. This implies that the products of $X_i-Y_i$ with monomials 
in $E$, $\C P^1$ and $\C P^2$ that involve at least one of the curves generate $\ker (f)\cap\ker (\sigma )$. Since $E$ and 
$\C P^1$ admit orientation-reversing self-diffeomorphisms,
it follows that $X_i\times E$ and $Y_i\times E$, respectively $X_i\times\C P^1$ and $Y_i\times\C P^1$, are not just
diffeomorphic, as proved above, but that the diffeomorphism may be chosen to preserve the orientations.
This completes the proof.
\end{proof}

We can now give a complete answer to Hirzebruch's question concerning Hodge numbers.

\begin{proof}[Proof of Theorem~\ref{t:main2}]
We consider integral linear combinations of Hodge numbers as homomorphisms $\varphi\colon\Ho_n\longrightarrow\Z_m$.
If a linear combination of Hodge numbers defines an unoriented homeomorphism invariant, then by Theorem~\ref{t:ker}
the corresponding homomorphism $\varphi$ factors through $f$. Looking at the description of $\im (f)$ in Proposition~\ref{t:comp},
we see that every homeomorphism-invariant linear combination of Hodge numbers is a combination of the even-degree
Betti numbers and the halves of the odd-degree Betti numbers. By the first part of Theorem~\ref{t:Hdiff}, the same 
conclusion holds for unoriented diffeomorphism invariants in dimensions $n\neq 2$.

Combining the above discussion with the second part of Theorem~\ref{t:Hdiff} completes the proof of Theorem~\ref{t:main2}.
\end{proof}

\begin{ex}
By the results of~\cite{MAorient} used above, the signature itself is not a homeomorphism invariant of smooth complex projective varieties.
However, the reduction mod $4$ of the signature is a homeomorphism invariant, since by the proof of Theorem~\ref{t:ker}, it vanishes on 
the ideal $\I=\ker (f)$. Theorem~\ref{t:main2} then tells us that the signature of a K\"ahler manifold is congruent
mod $4$ to a linear combination of even-degree Betti numbers and halves of odd-degree Betti numbers. 
This latter fact also follows from the Hodge index theorem, which gives the precise congruence~\eqref{eq:sign}.
\end{ex}

\section{The Chern--Hodge ring}\label{s:CH}

\subsection{Unitary bordism}\label{ss:bordism}

We now recall the classical results about the complex bordism ring $\Om_* = \oplus_{n=0}^{\infty}\Om_n$ that we shall need.
By results of Milnor~\cite{M,TM} and Novikov~\cite{NN} this is a polynomial ring over $\Z$ on countably many generators $\beta_i$, one for 
every complex dimension $i$. In particular, the degree $n$ part $\Om_n$ is a free $\Z$-module of rank $p(n)$, the number 
of partitions of $n$. Two stably almost complex manifolds of the same dimension have the same Chern numbers if and only if they represent the 
same element in $\Om_*$.

The $\beta_i$ are commonly referred to as a basis sequence, and we will need to discuss some
special choices of such basis sequences. An element $\beta_n\in\Om_n$ can be taken as a generator over $\Z$ if and only if 
a certain linear combination of Chern numbers $s_n$, referred to as the Thom-Milnor number, satisfies 
$s_n(\beta_n)=\pm 1$ if $n+1$ is not a prime power, and $s_n(\beta_n)=\pm p$ if $n+1$ is a power of the prime $p$. 

In the case of $\Om_*\otimes\Q$ one may take $\beta_i = \C P^i$ as a basis sequence, but this is not a basis sequence
over $\Z$. Milnor proved that one can obtain a basis sequence over $\Z$ by considering formal $\Z$-linear combinations of 
complex projective spaces and of smooth hypersurfaces $H\subset \C P^k\times\C P^{i+1-k}$ of bidegree $(1,1)$, cf.~\cite{TM}
and~\cite[pp.~249--252]{MDT}. It follows
that one may take (disconnected) projective, in particular K\"ahler, manifolds for the generators of $\Om_*$ over $\Z$. 
These projective manifolds are very special, in that they are birational to $\C P^i$:
\begin{lem}\label{l:Mrat}
Milnor manifolds, that is, smooth hypersurfaces $H\subset \C P^k\times\C P^{i+1-k}$ of bidegree $(1,1)$, are rational.
\end{lem}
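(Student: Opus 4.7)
The plan is to show that $H$ is birational to a product of projective spaces of total dimension $i$ by projecting onto one of the factors and recognising the map as a projective bundle with a rational section. Set $m := i+1-k$, so $\dim H = k+m-1 = i$. The edge cases $k=0$ or $m=0$ are trivial: then $H$ is a hyperplane in a single projective space, hence isomorphic to $\C P^{i}$. So I would assume $k,m\geq 1$.

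Next, I would write the defining equation of $H$ in the form $\sum_{j=0}^{m}\ell_j(x)\,y_j = 0$, where each $\ell_j$ is a linear form on $\C P^k$ determined by the $(k+1)\times(m+1)$ coefficient matrix. Consider the projection $\pi\colon H\to\C P^k$ onto the first factor. Over a point $[x_0]$ with $(\ell_0(x_0),\ldots,\ell_m(x_0))\neq 0$, the fibre is the linear hyperplane in $\C P^m$ cut out by $\sum_j\ell_j(x_0)y_j=0$, a copy of $\C P^{m-1}$. This condition defines a dense Zariski open $U\subset\C P^k$, on which $\pi$ is a $\C P^{m-1}$-bundle.

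The remaining step is to show this bundle is birationally trivial. Because $H$ is smooth (hence irreducible), the rank of the matrix $(a_{ij})$ must be at least $2$ — if only one $\ell_j$ were nonzero the equation would factor, contradicting irreducibility — so after a linear change of $y$-coordinates one may assume $\ell_0,\ell_1$ are linearly independent linear forms on $\C P^k$. Then the rational map
\[
[x]\longmapsto \bigl([x],\,[-\ell_1(x):\ell_0(x):0:\cdots :0]\bigr)
\]
is a well-defined rational section of $\pi$ on the open set where $\ell_0$ and $\ell_1$ do not vanish simultaneously. A $\C P^{m-1}$-bundle with a rational section over a rational base is birational to $\C P^k\times\C P^{m-1}$, which is rational of dimension $i$; hence so is $H$.

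The only potential subtlety is the existence of the rational section, but this is immediate once one uses smoothness of $H$ to arrange that at least two $\ell_j$'s be linearly independent. Everything else is routine, so I do not foresee a real obstacle.
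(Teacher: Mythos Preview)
Your proof is correct and takes a genuinely different route from the paper's. The paper's argument is very short: pass to the affine chart $x_0\neq 0\neq y_0$ in $\C^k\times\C^{i+1-k}=\C^{i+1}$, observe that the bidegree $(1,1)$ equation becomes a polynomial of degree at most two in the affine coordinates, and conclude that $H$ is birational to an irreducible quadric in $\C P^{i+1}$, which is classically rational. You instead project to one factor, realize $H$ over a dense open as a $\C P^{m-1}$-bundle, and trivialize it birationally via an explicit rational section. Your approach is more self-contained and even yields the finer statement that $H$ is birational to $\C P^k\times\C P^{m-1}$; the paper's is quicker but leans on the rationality of quadrics as a black box.

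Two small remarks. First, ``smooth hence irreducible'' deserves a word of justification; here smoothness actually forces the coefficient matrix to have rank $\min(k+1,m+1)\geq 2$ (otherwise one finds a point where all partials vanish), which gives irreducibility directly --- or alternatively use Lefschetz for the very ample class $(1,1)$ to get connectedness. Second, the rational section is not strictly necessary: over $U$ the fibration is the projectivization of the rank-$m$ kernel bundle of the surjection $y\mapsto\sum_j\ell_j(x)y_j$, hence Zariski-locally trivial, so already birational to $U\times\C P^{m-1}$ without further work. Your section is a pleasant explicit witness to the same fact.
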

\begin{proof}
Let $x$ and $y$ be homogeneous coordinates on $\C P^k$ respectively $\C P^{i+1-k}$. In an affine chart $\C^k\times\C^{i+1-k}=\C^{i+1}$
given by $x_0\neq 0 \neq y_0$, say, the defining equation of $H$ of bidegree $(1,1)$ in $x$ and $y$ becomes a quadratic 
equation in the coordinates of $\C^{i+1}$. Therefore $H$ is birational to an irreducible quadric in $\C P^{i+1}$, which is well known 
to be rational.
\end{proof}

Finally the Todd genus $\Td\colon\Om_*\longrightarrow\Hi_*$ is the ring homomorphism sending a bordism class $[M]$ to
$(\Td_0(M)+\Td_1(M) y+\ldots+\Td_n(M) y^n)z^n$, where the $\Td_p$ are certain combinations of Chern numbers. 
By the Hirzebruch--Riemann--Roch theorem one has $\Td_p=\chi_p = \sum_q(-1)^q h^{p,q}$.

\subsection{Combining the Hodge and bordism rings}

We now consider finite linear combinations of equidimensional compact K\"ahler manifolds with coefficients in $\Z$,
and identify two such linear combinations if they have the same dimensions and the same Hodge and Chern numbers.
The set of equivalence classes is naturally a graded ring, graded by the dimension, with multiplication induced by the 
Cartesian product of K\"ahler manifolds. We call this the Chern--Hodge ring $\CH_*$.

The degree $n$ part $\CH_n$ of the Chern--Hodge ring is the diagonal submodule $\Delta_n\subset\Ho_n\oplus\Om_n$ 
generated by all
$$
(H_{x,y,z}(M^n),[M^n])\in\Ho_n\oplus\Om_n \ , 
$$
where $M$ runs over compact K\"ahler manifolds of complex dimension $n$ and the square brackets denote bordism classes.

\begin{prop}\label{p:diagonal}
The diagonal submodule $\Delta_n$ is the kernel of the surjective homomorphism 
\begin{alignat*}{1}
h\colon \Ho_n\oplus\Om_n &\longrightarrow\Hi_n\\
(H_{x,y,z}(M),[N]) &\longmapsto \chi (M)-\Td(N) \ ,
\end{alignat*}
where $\chi\colon\Ho_*\longrightarrow\Hi_*$ is the Hirzebruch genus, and $\Td\colon\Om_*\longrightarrow\Hi_*$ is the 
Todd genus.
\end{prop}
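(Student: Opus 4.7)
The plan is to verify the asserted equality in three steps: the easy inclusion $\Delta_n \subset \ker(h)$, the surjectivity of $h$, and the nontrivial inclusion $\ker(h) \subset \Delta_n$. For any compact K\"ahler manifold $M$ of complex dimension $n$, the Hirzebruch--Riemann--Roch identity $\chi_p(M) = \Td_p(M)$ gives $\chi(H_{x,y,z}(M)) = \Td([M])$, so $h(H_{x,y,z}(M), [M]) = 0$; hence $\Delta_n \subset \ker(h)$. Surjectivity then follows from $h(\cdot, 0) = \chi$ combined with the surjectivity of $\chi\colon\Ho_n \to \Hi_n$ established in Theorem~\ref{t:Hirz}.

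For the nontrivial inclusion I would take $(a, b) \in \ker(h)$ and proceed in two stages. First, invoke the Milnor--Novikov structure of $\Om_*$ recalled in Subsection~\ref{ss:bordism}: its polynomial generators can be chosen as projective manifolds, so $\Om_n$ is spanned as a $\Z$-module by bordism classes of K\"ahler manifolds. Writing $b = \sum_i n_i [M_i]$ with the $M_i$ K\"ahler, set $a' := a - \sum_i n_i H_{x,y,z}(M_i) \in \Ho_n$. The hypothesis $(a, b) \in \ker(h)$ forces $\chi(a') = \chi(a) - \Td(b) = 0$, so $a' \in \ker(\chi)$. It therefore suffices to show that $(a', 0) \in \Delta_n$ whenever $a' \in \ker(\chi)$.

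Second, by Theorem~\ref{t:Hirz} the kernel of $\chi$ is the principal ideal $(E) \subset \Ho_*$, so $a' = E \cdot c$ for some $c \in \Ho_{n-1}$. By Theorem~\ref{t:structure} (equivalently Corollary~\ref{c:Hodge}), $\Ho_{n-1}$ is spanned as a $\Z$-module by monomials in the K\"ahler generators $L$, $E$, and $S$, so $c = \sum_j m_j H_{x,y,z}(K_j)$ for K\"ahler manifolds $K_j$. Then $a' = \sum_j m_j H_{x,y,z}(E \times K_j)$, and the crucial point is that $[E \times K_j] = [E] \cdot [K_j] = 0$ in $\Om_n$, because $\Om_1 \cong \Z$ is detected by $c_1$ and $c_1(E) = 0$. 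Hence $(a', 0) = \sum_j m_j (H_{x,y,z}(E \times K_j), [E \times K_j]) \in \Delta_n$, and therefore $(a, b) \in \Delta_n$ as desired.

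The essential difficulty is matching the Hodge and bordism data of the pair simultaneously. The observation that makes this tractable is the vanishing of $[E]$ in $\Om_1$, which ensures that the correction forced to live in $\ker(\chi) = (E)$ can be realized by products with an elliptic curve whose bordism class already vanishes, so that the bordism entry is left undisturbed while the Hodge entry is adjusted freely.
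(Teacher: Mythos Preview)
Your proof is correct and follows essentially the same approach as the paper: establish $\Delta_n\subset\ker(h)$ via HRR, get surjectivity from that of $\chi$, and for the reverse inclusion first realize the bordism component by K\"ahler manifolds, then use $\ker(\chi)=(E)$ together with $[E]=0$ in $\Om_*$ to absorb the remaining Hodge correction without disturbing the bordism entry. The only difference is cosmetic: the paper writes a general element as $(H_{x,y,z}(M),[N])$ with $M$, $N$ understood as formal linear combinations and applies HRR directly to $N$, whereas you spell out the decompositions $b=\sum_i n_i[M_i]$ and $c=\sum_j m_j H_{x,y,z}(K_j)$ explicitly.
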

\begin{proof}
The surjectivity of $h$ follows from the surjectivity of $\chi$ proved in Theorem~\ref{t:Hirz}.

By the Hirzebruch--Riemann--Roch theorem $\Delta_n\subset\ker (h)$. 
To check the reverse inclusion consider an element $(H_{x,y,z}(M),[N])\in\ker (h)$.
This means $\chi(M)=\Td(N)$, and so, applying HRR to $N$, $\chi(M)=\chi(N)$. Since by Theorem~\ref{t:Hirz} the kernel 
of $\chi$ is the principal ideal generated by an elliptic curve $E$, we conclude that in the Hodge ring the difference of 
$M$ and $N$ is of the form $E\cdot P$, where $P$ is a homogeneous 
polynomial of degree $n-1$ in the generators of $\Ho_*$. Thus in $\Ho_n\oplus\Om_n$ we may write
$$
(H_{x,y,z}(M),[N]) = (H_{x,y,z}(N),[N]) + (H_{x,y,z}(E\cdot P), 0) \ .
$$
Since an elliptic curve $E$ represents zero in the bordism ring, we have $(H_{x,y,z}(E\cdot P), 0)=(H_{x,y,z}(E\cdot P), [E\cdot P])$,
and so the second summand on the right hand side is in the diagonal submodule. As the first summand is trivially in $\Delta_n$,
we have now proved $\ker (h)\subset\Delta_n$.
\end{proof}

As a consequence of Proposition~\ref{p:diagonal}, $\CH_n=\Delta_n$ is a free $\Z$-module of rank
\begin{alignat*}{1}
\rk\CH_n &= \rk\Ho_n+\rk\Om_n-\rk\Hi_n \\
&= \big[\frac{n+2}{2}\big]\cdot\big[\frac{n+3}{2}\big] + p(n)-\big[\frac{n+2}{2}\big] \\
&=  \big[\frac{n+2}{2}\big]\cdot\big[\frac{n+1}{2}\big] + p(n) \ .
\end{alignat*}

The structure of the Chern--Hodge ring is described by the following result.
\begin{thm}\label{t:CHring}
Let $\beta_1=\C P^1, \beta_2, \beta_3,\ldots$ be $\Z$-linear combinations of K\"ahler manifolds forming a basis sequence for the 
complex bordism ring $\Om_*$, and let $P_i(E,\beta_1,\beta_2)$ be the unique polynomial in $E$, $\beta_1$ and 
$\beta_2$ having the same image in the Hodge ring as $\beta_i$. Then the Chern--Hodge ring $\CH_*$ is isomorphic as a graded 
ring to the quotient of $\Z[E,\beta_1,\beta_2,\beta_3,\ldots]$ by the ideal $\I$ generated by all $E(\beta_i-P_i(E,\beta_1,\beta_2))$.
\end{thm}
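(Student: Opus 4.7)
The plan is to define a graded ring homomorphism
\[
\Phi\colon \Z[E,\beta_1,\beta_2,\beta_3,\ldots] \longrightarrow \CH_*
\]
sending each generator to the class it represents in $\CH_*$ (recall from Proposition~\ref{p:diagonal} that $\CH_n\subset\Ho_n\oplus\Om_n$ sits as the diagonal submodule), to verify that $\I\subset\ker\Phi$, and then to show that the induced map $\bar\Phi$ is both surjective and between free $\Z$-modules of equal rank in every degree. The inclusion $\I\subset\ker\Phi$ is immediate: the Hodge component of $\Phi(E(\beta_i-P_i))$ vanishes by the very definition of $P_i$, and the bordism component vanishes because the elliptic curve $E$ has vanishing Chern numbers and therefore represents zero in $\Om_*$.

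Surjectivity is the heart of the argument and is modelled on the proof of Proposition~\ref{p:diagonal}. Given a K\"ahler manifold $M$ of complex dimension $n$, write $[M]=Q(\beta_1,\beta_2,\ldots)$ in $\Om_*$, so that $\Phi(Q)=(H_{x,y,z}(Q),[M])$ and $(H_{x,y,z}(M),[M])-\Phi(Q)=(H_{x,y,z}(M)-H_{x,y,z}(Q),0)\in\CH_n$. Because $[M]=[Q]$ in $\Om_*$, HRR gives $\chi(H_{x,y,z}(M)-H_{x,y,z}(Q))=\Td([M])-\Td([Q])=0$, and Theorem~\ref{t:Hirz} places the difference inside the principal ideal $(E)\subset\Ho_*$. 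Corollary~\ref{c:Hodge} now applies with $S=\beta_2$: a complex-dimension-two bordism generator has Thom--Milnor number $s_2=3\sigma=\pm 3$, hence signature $\pm 1$, so $\Ho_*=\Z[E,\beta_1,\beta_2]$, and we may write $H_{x,y,z}(M)-H_{x,y,z}(Q)=E\cdot R(E,\beta_1,\beta_2)$. Then $\Phi(Q+E\cdot R)=(H_{x,y,z}(M),[M])$, proving surjectivity of $\bar\Phi$.

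For the rank match, substitute $\gamma_i:=\beta_i-P_i(E,\beta_1,\beta_2)$ for $i\geq 3$, an invertible change of variables which identifies the source with $\Z[E,\beta_1,\beta_2,\gamma_3,\gamma_4,\ldots]$ and $\I$ with the ideal $(E\gamma_3,E\gamma_4,\ldots)$. The quotient is manifestly free as a $\Z$-module, spanned by monomials either not involving any $\gamma_i$ (forming $\Z[E,\beta_1,\beta_2]$, of degree-$n$ rank $\rk\Ho_n=[(n+2)/2]\cdot[(n+3)/2]$) or not involving $E$ but involving at least one $\gamma_i$ (contributing a further $p(n)-[(n+2)/2]$ in degree $n$, after subtracting the $\Z[\beta_1,\beta_2]$ overlap, since $\deg\gamma_i=i$). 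The total degree-$n$ rank $[(n+2)/2]\cdot[(n+1)/2]+p(n)$ matches $\rk\CH_n$ as computed after Proposition~\ref{p:diagonal}. A surjection of free $\Z$-modules of equal finite rank is automatically an isomorphism, so $\bar\Phi$ is the claimed isomorphism of graded rings. The main obstacle is the surjectivity step, where one must realise the Hodge polynomial of an arbitrary K\"ahler manifold as a polynomial in $E$, $\beta_1$, $\beta_2$ compatible with a chosen bordism representative; the route via HRR and Theorem~\ref{t:Hirz} seems the only natural one, and it crucially exploits the fact that signature $\pm 1$ is forced on any degree-two bordism generator. Once this is in place, the rank computation is purely combinatorial.
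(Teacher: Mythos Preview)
Your proof is correct and follows essentially the same route as the paper's: define the canonical homomorphism, check that $\I$ lies in its kernel, prove surjectivity via HRR and Theorem~\ref{t:Hirz} (using that $s_2=3\sigma$ forces $\sigma(\beta_2)=\pm1$ so Corollary~\ref{c:Hodge} applies), and finish with a rank comparison. The only difference is in the final step: the paper simply notes that the degree-$n$ part of the quotient is generated by $\rk\CH_n$ monomials and invokes the standard fact that a module generated by $r$ elements surjecting onto a free module of rank $r$ must be free and isomorphic to it; your change of variables $\gamma_i=\beta_i-P_i$ turns $\I$ into a monomial ideal and thereby makes the freeness and the rank count explicit. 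This is a pleasant cosmetic improvement but not a different argument.
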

\begin{proof}
In degree $2$ the Thom-Milnor number $s_2$ of a K\"ahler surface equals $c_1^2-2c_2$, which is $3$ times the signature.
Since $\beta_2$ is a generator of the bordism ring, we have $s_2(\beta_2)=\pm 3$, so $\beta_2$ has signature $\pm 1$.
By Corollary~\ref{c:Hodge} this means that $\Ho_*=\Z[E,\beta_1,\beta_2]$. Therefore, for each $\beta_i$ there is 
indeed a unique polynomial $P_i(E,\beta_1,\beta_2)$ having the same image as $\beta_i$ in $\Ho_*$.

Consider the canonical ring homomorphism
$$
\phi\colon\Z[E,\beta_1,\beta_2,\beta_3,\ldots]\longrightarrow\CH_* \ .
$$
We first prove that $\phi$ is surjective. Let $M$ be a compact K\"ahler manifold of dimension $n$, and $[M]\in\Om_n$
its bordism class. We need to show that $(H_{x,y,z}(M),[M])\in\im (\phi )$.
Since the $\beta_i$ form a basis sequence for the bordism ring, there is a unique homogeneous polynomial 
$P$ of degree $n$ in the $\beta_i$ such that $[M]=[P(\beta_i,\ldots,\beta_n)]\in\Om_n$. We then have 
$\phi (P)= (H_{x,y,z}(P),[M])\in\CH_n$. Moreover, $H_{x,y,z}(P)-H_{x,y,z}(M)$ is in the kernel of the Hirzebruch
genus, which by Theorem~\ref{t:Hirz} is the ideal $(E)\subset\Ho_*$. Thus, in $\Ho_*$ we may write $M=P+EQ$,
where $Q$ is a homogeneous polynomial of degree $n-1$ in $E$, $\beta_1$ and $\beta_2$. Since $E$ maps to
zero in the bordism ring, we conclude $\phi (P+EQ)=(H_{x,y,z}(M),[M])$. This completes the proof of surjectivity.

Finally we need to show that $\ker (\phi )=\I$. By the definition of $\I$, we have $\I\subset\ker (\phi )$, and so 
$\phi$ descends to the quotient $\Z[E,\beta_1,\beta_2,\beta_3,\ldots]/\I$. The degree $n$ part of this 
quotient surjects to $\CH_n$, which is a free module of rank 
$$
\big[\frac{n+2}{2}\big]\cdot\big[\frac{n+1}{2}\big] + p(n) \ ,
$$
where $p(n)=\rk\Om_n$ is the number of partitions of $n$. Looking at the definition of $\I$ we see that the degree 
$n$ part of the quotient $\Z[E,\beta_1,\beta_2,\beta_3,\ldots]/\I$ is generated as a $\Z$-module by $\rk\CH_n$ many
monomials. Since we know already that $\phi$ is surjective, this shows that $\phi$ is injective, and therefore an
isomorphism.
\end{proof}

We can now generalize Theorem~\ref{t:biratideal} from the Hodge to the Chern--Hodge ring:
\begin{thm}\label{t:CHbiratideal}
Let $\I\subset\CH_*$ be the ideal generated by differences of birational smooth complex projective varieties. Then 
there is a basis sequence for the bordism ring with $\beta_1=\C P^1$ and $\beta_i\in\I$ for all $i\geq 2$.
Furthermore, $\I$ is the kernel of the composition
$$
\CH_*\stackrel{p}{\longrightarrow}\Ho_*\stackrel{b}{\longrightarrow}\Z[y,z] \ ,
$$
where  $p\colon\CH_*\longrightarrow\Ho_*$ is the projection  
and $b\colon\Ho_*\longrightarrow\Z[y,z]$ is given by setting $x=0$ in the Hodge polynomials.
\end{thm}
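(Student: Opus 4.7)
The plan is to prove the two assertions in order: first, construct a basis sequence for $\Om_*$ with $\beta_1 = \C P^1$ and $\beta_i \in \I$ for $i \geq 2$; second, use this together with Theorem~\ref{t:CHring} to identify $\I$ with $\ker(b \circ p)$.

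For the basis sequence, by Milnor's classical construction (cf.\ Subsection~\ref{ss:bordism}), for each $i \geq 2$ there is a $\Z$-linear combination $\tilde\beta_i$ of $\C P^i$ and the Milnor manifolds $H_{r,i+1-r}$ whose Thom--Milnor number $s_i(\tilde\beta_i)$ equals $\pm 1$ if $i+1$ is not a prime power and $\pm p$ if $i+1 = p^k$. By Lemma~\ref{l:Mrat}, all these varieties are rational, as is $\C P^1 \times \C P^{i-1}$. Letting $N$ denote the sum of the coefficients of $\tilde\beta_i$, the modified element
$$
\beta_i := \tilde\beta_i - N \cdot (\C P^1 \times \C P^{i-1})
$$
is a $\Z$-linear combination of pairwise birational rational projective varieties of dimension $i$ whose coefficients sum to zero; equivalently, it is a $\Z$-linear combination of differences of birational projective varieties, and hence lies in $\I$. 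Since $s_i$ vanishes on nontrivial products by the product formula for characteristic numbers, $s_i(\beta_i) = s_i(\tilde\beta_i)$, so $\beta_i$ is still a valid basis sequence element.

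For the second assertion, the inclusion $\I \subseteq \ker(b \circ p)$ is immediate because the $h^{0,q}$ are birational invariants, as in the proof of Theorem~\ref{t:biratideal}. For the reverse inclusion, Theorem~\ref{t:CHring} presents $\CH_*$ as a quotient of $\Z[E, \beta_1, \beta_2, \ldots]$, and since $\beta_i \in \I$ for $i \geq 2$ by the previous step, the induced ring map $\Z[E, L] \to \CH_*/\I$ is surjective. Its composition with the map $\CH_*/\I \to \Z[y,z]$ induced by $b \circ p$ sends $E \mapsto (1+y)z$ and $L \mapsto z$, and is injective in each degree $n$ since the polynomials $(1+y)^0, (1+y)^1, \ldots, (1+y)^n$ are $\Z$-linearly independent in $\Z[y]$. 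Therefore both maps are isomorphisms onto their images, so $\ker(b \circ p) = \I$. The one nontrivial observation enabling this argument is the trick in the first step of subtracting $N \cdot (\C P^1 \times \C P^{i-1})$ to normalize the coefficient sum to zero without disturbing $s_i$; once that is in hand, the rest follows formally from the structure theorem.
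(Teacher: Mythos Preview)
Your proof is correct and follows essentially the same approach as the paper. The only cosmetic differences are that the paper subtracts a copy of $(\C P^1)^i$ from each individual rational variety appearing in the Milnor generator rather than a multiple of $\C P^1\times\C P^{i-1}$ from the whole sum, and that your argument for the inclusion $\ker(b\circ p)\subseteq\I$ is spelled out more explicitly via the injectivity of $\Z[E,L]\to\Z[y,z]$, whereas the paper simply remarks that the choice of generators makes this clear.
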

\begin{proof}
Take $\beta_1=\C P^1$, and $\C P^2 - \C P^1\times \C P^1 = -C$ as the generator $\beta_2$ in degree $2$. 
In higher degrees we take the Milnor generators, which are formal linear combinations of projective spaces and 
of Milnor manifolds, and, like in degree $2$, subtract from each projective space or Milnor manifold a copy of 
$\beta_1^n=\C P^1\times\ldots\times\C P^1$. This does not change the property of being generators (over $\Z$), 
but, after this subtraction, we have generators $\beta_i$ which for $i\geq 2$ are contained in $\I$ by Lemma~\ref{l:Mrat}.
This completes the proof of the first statement. For the second statement note that by Theorem~\ref{t:biratideal}
the ideal $\I$ is contained in the kernel of $b\circ p$. Conversely, our choice of generators shows that $\ker (b\circ p)\subset\I$.
\end{proof}
As a consequence of this result, Theorem~\ref{t:Hbirat} holds for combinations of Hodge and Chern numbers:
\begin{cor}\label{c:CHbirat}
The mod $m$ reduction of an integral linear combination of Hodge and Chern numbers is a birational invariant of smooth complex 
projective varieties if and only if after adding a suitable combination of the $\chi_p-\Td_p$ it is congruent to a linear combination of 
the $h^{0,q}$ plus a linear combination of Chern numbers that vanishes mod $m$ when evaluated on any smooth complex projective variety.
\end{cor}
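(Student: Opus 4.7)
The plan is to translate the statement into a question about homomorphisms $\varphi\colon\CH_n\to\Z_m$ and then invoke Theorem~\ref{t:CHbiratideal}. First I would fix the dictionary: a mod-$m$ integral linear combination $\Phi$ of Hodge and Chern numbers is a pair $(\alpha,\beta)\in\Hom(\Ho_n,\Z_m)\oplus\Hom(\Om_n,\Z_m)$, which restricts to a homomorphism $\varphi\colon\CH_n\to\Z_m$ via $\varphi(H_{x,y,z}(M),[M])=\alpha(H_{x,y,z}(M))+\beta([M])$. By Proposition~\ref{p:diagonal}, $\CH_n=\ker(h)$ where $h=\chi-\Td\colon\Ho_n\oplus\Om_n\longrightarrow\Hi_n$ is surjective, and since $\Hi_n$ is free over $\Z$ by Theorem~\ref{t:Hirz}, the resulting short exact sequence splits. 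Consequently, two pairs induce the same $\varphi$ iff their difference has the form $(\gamma\circ\chi,-\gamma\circ\Td)$ for some $\gamma\colon\Hi_n\to\Z_m$. Unwinding this, such a modification of the pair corresponds exactly to adjusting $\Phi$ by a combination $\sum_p\lambda_p(\chi_p-\Td_p)$ with $\lambda_p=\gamma(y^pz^n)$.

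Next I would apply Theorem~\ref{t:CHbiratideal}: $\Phi$ is a birational invariant mod $m$ iff $\varphi$ annihilates the ideal $\I$ of birational differences, iff $\varphi$ factors through $b\circ p\colon\CH_n\to\Z[y,z]$. If $\varphi=\psi\circ b\circ p$ for some $\psi$, then the pair $(\psi\circ b,0)$ realises the same $\varphi$, and the formula $(\psi\circ b)(H_{x,y,z}(M))=\sum_q\psi(y^qz^n)\,h^{0,q}(M)$ exhibits $\psi\circ b$ as a pure linear combination of the $h^{0,q}$. Hence there exists $\gamma$, namely the one producing the difference $(\alpha,\beta)-(\psi\circ b,0)$, such that $\Phi$ plus the corresponding combination of the $\chi_p-\Td_p$ is congruent mod $m$ to a combination of the $h^{0,q}$, with Chern part identically zero---in particular trivial mod $m$ on every projective variety.

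The converse is formal: the $h^{0,q}$ are birational invariants by \cite[p.~494]{GH}, each $\chi_p-\Td_p$ vanishes on every smooth projective variety by the Hirzebruch--Riemann--Roch theorem, and a Chern combination that is $\equiv0\pmod m$ on every variety contributes nothing to the mod-$m$ functional. Combining the two directions yields the claim.

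The principal obstacle I foresee is the bookkeeping in the first paragraph, namely the careful identification of the HRR ambiguity $(\gamma\circ\chi,-\gamma\circ\Td)$ with additions of combinations of the $\chi_p-\Td_p$ at the level of pairs of functionals. Once this correspondence is in place, the result follows cleanly from Theorem~\ref{t:CHbiratideal} together with Proposition~\ref{p:diagonal}.
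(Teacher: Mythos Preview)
Your proposal is correct and follows exactly the route the paper intends: the corollary is stated in the paper without proof, as an immediate consequence of Theorem~\ref{t:CHbiratideal}, and what you have written is precisely the unpacking of that deduction via Proposition~\ref{p:diagonal}. The identification of the HRR ambiguity with pullbacks along $h=\chi-\Td$, and the observation that a factorisation $\varphi=\psi\circ b\circ p$ can be realised by the pair $(\psi\circ b,0)$, are the two points that need to be made explicit, and you handle both correctly. One small remark: the splitting of the short exact sequence is not actually needed---the identification of the ambiguity uses only left-exactness of $\Hom$, while freeness of $\Hi_n$ (and of $\im(b)$) is used only at the very end to lift $\gamma$ and $\psi$ from $\Z_m$ back to integer coefficients $\lambda_p$, $\mu_q$.
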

One should keep in mind that the Hodge numbers in this statement are, as always, taken modulo the K\"ahler symmetries.
The corresponding statement over $\Q$ follows from the statement about congruences.

\section{The general Hirzebruch problem}\label{s:Hgen}

Finally we address the general version of Hirzebruch's Problem~31 from~\cite{Hir1} asking which linear combinations of Hodge and Chern numbers
are topological invariants. This combines the work about Hodge numbers in Section~\ref{s:HH} above with the work on Chern numbers
in~\cite{Chern}. The first step is the following result.
\begin{thm}\label{t:CHtop1}
The ideal $\J$ in $\CH_*\otimes\Q$ generated by differences of homeomorphic projective varieties is the kernel of 
the forgetful homomorphism
$$
F\colon\CH_*\otimes\Q\longrightarrow\Po_*\otimes\Q \ .
$$

In degrees $\geq 3$ this ideal coincides with the one generated by differences of diffeomorphic projective varieties.
\end{thm}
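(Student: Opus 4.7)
The inclusion $\J\subset\ker F$ is immediate, since homeomorphic manifolds have identical Poincar\'e polynomials. For the reverse containment, the plan is to factor $F$ as $F=f\circ p$, where $p\colon\CH_*\to\Ho_*$ is the Hodge-ring projection of Proposition~\ref{p:diagonal} and $f\colon\Ho_*\to\Po_*$ is the comparison map of Proposition~\ref{t:comp}. Given $\alpha\in\ker F\otimes\Q$, its image $p(\alpha)$ lies in $\ker f\otimes\Q$, and Theorem~\ref{t:ker} produces a rational combination $\beta=\sum_i\lambda_i(X_i-Y_i)$ of differences of homeomorphic projective varieties with $p(\beta)=p(\alpha)$. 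Then $\beta\in\J\otimes\Q$ and $\gamma=\alpha-\beta\in\ker p\otimes\Q$, so the problem reduces to establishing the containment $\ker p\otimes\Q\subset\J\otimes\Q$.

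By Proposition~\ref{p:diagonal}, $\ker p$ is identified with the submodule of bordism classes of vanishing Todd genus, and by Theorem~\ref{t:CHring} it is generated as an ideal in $\CH_*\otimes\Q$ by the elements $\beta_i-P_i(E,\beta_1,\beta_2)$ for $i\geq 3$. A direct computation in complex dimension $2$ already illustrates the mechanism: using the orientation-reversingly homeomorphic pairs of simply-connected algebraic surfaces $(X_i,Y_i)$ from~\cite{MAorient} together with the surface identities $c_1^2=2e+3\sigma$ and $c_2=e$, the bordism part of $a(X_1-Y_1)+b(X_2-Y_2)$ is forced by~\eqref{eq:ab} to match the bordism part of $G=4\C P^2-3L^2+E^2-2EL$, so already $G\in\J\otimes\Q$. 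For higher $i$ the plan is to invoke the main theorem of~\cite{Chern}, which supplies in $\Om_*\otimes\Q$ enough differences of homeomorphic projective varieties to control all bordism classes modulo topologically invariant Chern combinations. The residual Hodge-polynomial discrepancies produced by the pairs from~\cite{Chern} can then be handled by induction on degree: any such discrepancy has vanishing Hirzebruch genus by HRR and so, by Theorem~\ref{t:Hirz}, lies in the principal ideal $(E)\subset\Ho_*$; factoring off an elliptic curve annihilates the corresponding bordism contribution (since $[E]=0$ in $\Om_*$) and reduces the problem to a strictly lower degree, where the inductive hypothesis applies.

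For the diffeomorphism statement in degrees $n\geq 3$, the same scheme applies with Theorem~\ref{t:Hdiff} replacing Theorem~\ref{t:ker} and the diffeomorphism refinement of~\cite{Chern} replacing its homeomorphism version; the dimension restriction is inherited from the $h$-cobordism and $s$-cobordism arguments underlying Theorem~\ref{t:Hdiff}. The main obstacle lies in the middle paragraph: simultaneously controlling the bordism and Hodge-polynomial parts of the generators $\beta_i-P_i$ of $\ker p$, and making sure that the inductive absorption of Hodge-polynomial discrepancies via the $E$-factor terminates rather than loops. Once this is resolved, $\ker p\otimes\Q\subset\J\otimes\Q$, completing the identification of $\J$ with $\ker F$.
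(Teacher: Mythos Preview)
Your proposal is explicitly incomplete: you yourself flag the ``main obstacle'' of simultaneously controlling the bordism and Hodge parts of the generators $\beta_i-P_i$ of $\ker p$, and your inductive absorption scheme via the $E$-factor is only sketched, not carried out. The factorisation $F=f\circ p$ and the reduction to $\ker p\otimes\Q\subset\J\otimes\Q$ are correct, but you have not actually established that inclusion.

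The paper avoids this difficulty entirely by a different and much more direct route. Rather than working with an arbitrary basis sequence and then trying to correct the Hodge discrepancies of $\beta_i-P_i$, the paper invokes \cite[Theorem~10]{Chern} to choose from the outset a basis sequence $\beta_1=\C P^1,\beta_2,\beta_3,\ldots$ for $\Om_*\otimes\Q$ in which each $\beta_i$ with $i\geq 2$ is \emph{already} a difference of homeomorphic projective varieties, hence lies in $\J$. Feeding this basis sequence into the presentation of $\CH_*\otimes\Q$ from Theorem~\ref{t:CHring}, every monomial involving some $\beta_i$ with $i\geq 2$ lies in the ideal $\J$, while the monomials in $E$ and $\beta_1$ alone survive in $\Po_*\otimes\Q$ by Corollary~\ref{c:Ptensor}. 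Thus $\ker(F)$ is exactly the ideal generated by the $\beta_i$ with $i\geq 2$, and the inclusion $\ker(F)\subset\J$ is immediate. No separate analysis of $\ker p$, and no induction, is needed.

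For the diffeomorphism statement the paper proceeds similarly, noting that the same $\beta_i$ are differences of diffeomorphic varieties for $i\geq 3$ by \cite[Theorem~9]{Chern}, and then checking by hand that the degree-$\geq 3$ monomials involving $\beta_2$ (namely $\beta_1\beta_2$, $\beta_2^2$, and $E\beta_2$) are also differences of diffeomorphic varieties, the last via the $h$-cobordism argument of Lemma~\ref{l:scob}. Your reference to ``the same scheme'' is too vague here; the point is that the special choice of generators does all the work up front.
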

\begin{proof}
Since Poincar\'e polynomials are homeomorphism invariants, it is clear that $\J\subset\ker (F)$.

By~\cite[Theorem~10]{Chern} there is a basis sequence $\beta_1=\C P^1,\beta_2,\beta_3,\ldots$ for 
$\Om_*\otimes\Q$ with $\beta_i\in\J$ for all $i\geq 2$. On the one hand, this means that, in the description of 
$\CH_*\otimes\Q$ as a quotient of the polynomial ring $\Q[E,\beta_1,\beta_2,\beta_3,\ldots]$ given
by Theorem~\ref{t:CHring}, the only monomials in the generators whose residue classes are not 
necessarily in $\J$ are those involving only $E$ and $\beta_1$. On the other hand, it is clear
from Corollary~\ref{c:Ptensor} that the residue class of a non-trivial polynomial in $E$ and $\beta_1$ 
cannot be in $\ker (F)$. Thus $\ker (F)$ is the ideal generated by the 
$\beta_i$ with $i\geq 2$, and is therefore contained in $\J$. This proves the first statement in the theorem.

For the second statement note that the $\beta_i$ used above are in fact differences of diffeomorphic 
projective varieties as soon as $i\geq 3$, see~\cite[Theorem~9]{Chern}, and that the same is true for 
$\beta_1\cdot\beta_2$ and $\beta_2\cdot\beta_2$. The generator $\beta_2$ is a difference of 
orientation-reversingly homeomorphic simply connected algebraic surfaces $X$ and $Y$. As in the 
proof of Theorem~\ref{t:Hdiff} above it follows from Lemma~\ref{l:scob} that $E\times X$ and $E\times Y$
are diffeomorphic, and so $E\cdot\beta_2$ is also a difference of diffeomorphic projective varieties.
\end{proof}

Next we look at oriented topological invariants. For this it is convenient to introduce the oriented analogue 
of the Chern--Hodge ring. Consider formal $\Z$-linear combinations of equidimensional closed oriented 
smooth manifolds, and identify two such combinations if they have the same dimension, the same Betti
numbers, and the same Pontryagin numbers. The quotient is again a graded ring, which we call the Pontryagin--Poincar\'e
ring $\PP_*$, graded by the dimension. By Corollary~\ref{c:BPoriented} there are no $\Q$-linear relations between the Betti
and Pontryagin numbers. Therefore, by the classical result of Thom on the oriented bordism ring, we conclude 
$$
\PP_*\otimes\Q=(\Po_*\otimes\Q)\oplus(\Omega_*^{SO}\otimes\Q) \ ,
$$
where $\Omega_*^{SO}$ denotes the oriented bordism ring.

\begin{thm}\label{t:CHtop2}
The forgetful homomorphism 
$$
\tilde F\colon\CH_*\otimes\Q\longrightarrow\PP_*\otimes\Q
$$
is surjective onto the even-degree part of $\PP_*\otimes\Q$. Its kernel is the ideal $\JJ$ in $\CH_*\otimes\Q$ 
generated by differences of orientation-preservingly diffeomorphic smooth complex projective varieties.
\end{thm}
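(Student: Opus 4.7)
The plan is to parallel the proof of Theorem~\ref{t:CHtop1}, replacing homeomorphism by orientation-preserving diffeomorphism throughout and tracking the Pontryagin data alongside the Poincar\'e data. The containment $\JJ\subseteq\ker\tilde F$ is immediate, since an orientation-preserving diffeomorphism preserves both Betti and Pontryagin numbers, and the image of $\tilde F$ lies in the even-real-degree part of $\PP_*\otimes\Q$ because $\tilde F$ doubles the grading.

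For surjectivity onto the even-degree part, given a target $(P,\omega)\in\PP_{2n}\otimes\Q$, I would first realize $\omega$ as the oriented bordism class of some $N\in\CH_n\otimes\Q$ built as a rational combination of products of the complex projective spaces $\C P^{2i}$, which are K\"ahler and generate $\Omega^{SO}_*\otimes\Q$ by Thom's theorem; then $\tilde F(N)=(P_N,\omega)$. To correct the Poincar\'e polynomial from $P_N$ to $P$ without disturbing $\omega$, I would add a suitable element of the subring $\Q[E,\beta_1]\subseteq\CH_*\otimes\Q$, where $\beta_1=\C P^1$. Both $E$ and $\beta_1$ are complex curves and hence represent zero in $\Omega^{SO}_*\otimes\Q$, while $f(E)=(1+t)^2z^2$ and $f(\beta_1)=(1+t^2)z^2$ are algebraically independent in $\Po_*\otimes\Q=\Q[S^1,S^2]$ (Corollary~\ref{c:Ptensor}) and generate precisely its even-degree part, so any desired adjustment is achievable.

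For the reverse kernel inclusion, let $\xi\in\ker\tilde F$ and express it as a polynomial in $E,\beta_1,\beta_2,\beta_3,\ldots$ via Theorem~\ref{t:CHring}. The plan is to reduce $\xi$ modulo $\JJ$ to a representative in the small submodule $\Q[E,\beta_1]\oplus\Q\cdot\beta_2$ (with the $\beta_2$ summand relevant only in complex degree $2$). Two ingredients are used: (i) a basis sequence in which $\beta_i\in\JJ$ for all $i\geq 3$, obtained by refining~\cite[Theorem~9]{Chern} to orientation-preserving diffeomorphisms via $h$-cobordism bookkeeping in the simply-connected setting; and (ii) $E\beta_2,\beta_1\beta_2,\beta_2^2\in\JJ$, which follow from Lemma~\ref{l:scob} and the orientation-reversing self-diffeomorphisms of $E$ and $\C P^1$ (as in the proof of Theorem~\ref{t:Hdiff}) for the mixed products, and from the fact that a product of two orientation-reversing diffeomorphisms is orientation-preserving for the square. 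On the reduced submodule, $\tilde F$ is visibly injective: the $\Q[E,\beta_1]$ component maps isomorphically onto the even-degree part of $\Po_*\otimes\Q$ as in the surjectivity step, while $\beta_2$ contributes a nonzero class in $\Omega^{SO}_4\otimes\Q$, since $\beta_2=X_2-Y_2$ arises from projective surfaces that are orientation-reversingly homeomorphic with nonzero signature. Hence $\xi\in\JJ$.

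I expect the main obstacle to be ingredient~(i): arranging a basis sequence of $\Om_*\otimes\Q$ with $\beta_i\in\JJ$ for all $i\geq 3$. While~\cite[Theorem~9]{Chern} provides diffeomorphisms between the relevant projective varieties, extracting orientation-preserving ones and confirming that the resulting classes still constitute a basis sequence is the delicate point; once that orientation bookkeeping is under control, the remainder of the argument runs in close analogy with the proof of Theorem~\ref{t:CHtop1}.
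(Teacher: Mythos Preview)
Your surjectivity argument and the easy containment $\JJ\subset\ker(\tilde F)$ are fine and essentially match the paper. The genuine gap is in ingredient~(i): it is \emph{impossible} to choose a basis sequence with $\beta_i\in\JJ$ for all $i\geq 3$, and the obstacle is not orientation bookkeeping but Pontryagin numbers. For any even $i=2k$, the Thom--Milnor number $s_{2k}=\sum x_j^{2k}$ is the $k$th Newton power sum in the squares $x_j^2$ of the Chern roots, hence a polynomial in the Pontryagin classes; thus $s_{2k}$ is a $\Z$-linear combination of Pontryagin numbers. Since a generator $\beta_{2k}$ must have $s_{2k}(\beta_{2k})\neq 0$, it has nonzero Pontryagin numbers and therefore cannot lie in $\ker(\tilde F)\supseteq\JJ$. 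Equivalently, your reduction would force $(\CH_n\otimes\Q)/\JJ$ to be spanned by monomials in $E,\beta_1$ for $n\geq 3$, all of which have trivial Pontryagin numbers, contradicting the surjectivity onto $\PP_{2n}\otimes\Q\supset\Omega^{SO}_{2n}\otimes\Q$ that you yourself established.

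What the paper does instead is keep the even-indexed $\beta_i$ out of $\JJ$ entirely and show that $\ker(\tilde F)$ is generated, as an ideal, by three families that \emph{can} be placed in $\JJ$: the $\beta_i$ with $i\geq 3$ odd, the products $\beta_1\cdot\beta_j$ with $j\geq 2$ even, and the products $E\cdot\beta_j$ with $j\geq 2$. The first two families come from \cite[Theorem~7]{Chern}, which already produces orientation-preserving diffeomorphisms in odd complex dimension (and for $\beta_1\cdot\beta_{j}$); the third uses, as you noted, Lemma~\ref{l:scob} together with an orientation-reversing self-diffeomorphism of $E$. One then checks that no nontrivial polynomial in the even-indexed $\beta_i$ alone lies in $\ker(\tilde F)$, since such polynomials detect all Pontryagin numbers. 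So the fix is not to strengthen (i) but to weaken it to odd indices and compensate with the additional generators $\beta_1\beta_j$, $E\beta_j$ for even $j$.
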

\begin{proof}
Let $E$ be an elliptic curve and $\beta_1=\C P^1$, both considered as elements in $\CH_*\otimes\Q$.
By Corollary~\ref{c:Ptensor} all $\Q$-linear combinations of Betti numbers in even dimensions are detected by polynomials 
in $\tilde F(E)$ and $\tilde F (\beta_1)$. These elements in $\PP_*\otimes\Q$ have trivial Pontryagin numbers.
Using the same basis sequence $\beta_i$ as in the previous proof, we see that the images $\tilde F (\beta_i)$
have trivial Betti numbers if $i\geq 2$, but any non-trivial linear combination of Pontryagin numbers is 
detected by polynomials in the $\tilde F(\beta_i)$ with even $i$. This proves that the image of $\tilde F$ is the even-degree part of 
$\PP_*\otimes\Q$.
 
It is clear that $\JJ\subset\ker (\tilde F)$ since Betti and Pontryagin numbers are oriented diffeomorphism invariants.
By definition, $\JJ$ is a subideal of $\J$, which, by the previous theorem, equals $\ker (F)$.

Using the same basis sequence as in the previous proof, $\J=\ker (F)$ is the ideal generated by all $\beta_i$
with $i\geq 2$. By~\cite[Theorem~7]{Chern}, this basis sequence has the property that for odd $i\geq 3$ the 
elements $\beta_i$ and $\beta_1\cdot\beta_{i-1}$ are in $\JJ$. We also know that, for all $i\geq 2$, $E\cdot\beta_i$ is a 
difference of diffeomorphic projective varieties. Since $E$ admits orientation-reversing self-diffeomorphisms
we have $E\cdot\beta_i\in\JJ$.

By the proof of surjectivity of $\tilde F$ onto the even-degree part of $\PP_*\otimes\Q$,
 no non-trivial polynomial in the $\beta_i$ with $i$ even can be in $\ker (\tilde F)$. Thus 
$\ker (\tilde F)$ is the ideal generated by the residue classes of the 
$\beta_i$ with odd $i\geq 3$, and by the $\beta_1\cdot\beta_j$ and $E\cdot\beta_j$ with $j$ even.
All these generators are in $\JJ$, and so $\ker (\tilde F)\subset\JJ$. This completes
the proof.
\end{proof}

\begin{rem}\label{r:cong}
In Theorems~\ref{t:CHtop1} and~\ref{t:CHtop2} we worked over $\Q$ in order to be able to use the special basis 
sequences $\beta_n$ for the unitary bordism ring constructed in~\cite{Chern}. For $n\geq 5$ we could use instead
certain generators for $\Om_*\otimes\Z[\frac{1}{2}]$ constructed in~\cite[Prop.~4.1]{S}. A generator $\beta_2$ with all the required
properties, that would also work after inverting only $2$, was obtained in the proof of Theorem~\ref{t:ker} above,
but in degrees $n=3$ or $4$ we do 
not have any alternative generators. Checking the numerical factors in~\cite[Prop.~15]{Chern}, it turns out that 
the $\beta_3$ used in~\cite{Chern} and in the above proofs works for $\Om_*\otimes\Z[\frac{1}{2}]$, but 
the $\beta_4$ used there requires one to invert $3$, in addition to inverting $2$. Therefore, Theorems~\ref{t:CHtop1} 
and~\ref{t:CHtop2} are true for $\CH_*\otimes\Z[\frac{1}{6}]$.
\end{rem}

We can finally prove Theorem~\ref{t:main3}.

\begin{proof}[Proof of Theorem~\ref{t:main3}]
The vector space dual to $\CH_n\otimes\Q$ is made up of $\Q$-linear combinations of Hodge and Chern numbers, modulo
the linear combinations of the $\chi_p-\Td_p$, and modulo the implicit K\"ahler symmetries. 
If a linear form on $\CH_n\otimes\Q$ defines an unoriented homeomorphism invariant, 
or an unoriented diffeomorphism invariant in dimension $n\geq 3$, then by Theorem~\ref{t:CHtop1}
the corresponding homomorphism $\varphi$ factors through $F$, and so reduces to a combination of Betti numbers. 
Conversely, linear combinations of Betti numbers are of course homeomorphism-invariant.
This completes the proof of the second statement.

By Theorem~\ref{t:CHtop2} a linear form on $\CH_n\otimes\Q$ that defines an oriented diffeomorphism invariant
factors through $\tilde F$, and therefore reduces to a combination of Betti and Pontryagin numbers,
which make up the linear forms on $\PP_{2n}\otimes\Q$. Conversely,
these linear combinations are invariant under orientation-preserving diffeomorphisms, and even under orientation-preserving
homeomorphisms by  a result of Novikov~\cite{N}. This completes the proof of the first statement.
\end{proof}

\bibliographystyle{amsplain}

\bigskip

\end{document}